\documentclass[11pt,reqno,final]{amsart}

\usepackage{graphicx}
\usepackage{amsmath}
\usepackage{epstopdf} 

\usepackage{mathptmx}      
\usepackage{url,graphicx,tabularx,array,geometry, amsthm, amsfonts,parskip,alltt,hyperref,xcolor}
\usepackage{mathtools,tikz,algorithm,algpseudocode,amssymb}
\usepackage{pgf,tikz}
\usepackage{mathrsfs}
\usetikzlibrary{arrows}

\newcommand{\ve}[1]{\mathbf{#1}}
\DeclareMathOperator{\argmax}{argmax}
\DeclareMathOperator{\R}{\mathbb{R}}

\newtheorem{lemma}{Lemma}
\newtheorem{corollary}{Corollary}

\begin{document}

\title{On Motzkin's Method for Inconsistent Linear Systems}

\author{Jamie Haddock         \and
        Deanna Needell 
}

\begin{abstract}
Iterative linear solvers have gained recent popularity due to their computational efficiency and low memory footprint for large-scale linear systems. The \textit{relaxation method}, or \textit{Motzkin's method}, can be viewed as an iterative method that projects the current estimation onto the solution hyperplane corresponding to the most violated constraint. Although this leads to an optimal selection strategy for consistent systems, for inconsistent least square problems, the strategy presents a tradeoff between convergence rate and solution accuracy. 
We provide a theoretical analysis that shows Motzkin's method offers an initially accelerated convergence rate and this acceleration depends on the dynamic range of the residual.  We quantify this acceleration for Gaussian systems as a concrete example. 
Lastly, we include experimental evidence on real and synthetic systems that support the analysis. 
\end{abstract}

\thanks{This material is based upon work supported by the National Science Foundation under Grant No. DMS-1440140 while the authors were in residence at the Mathematical Sciences Research Institute in Berkeley, California, during the Fall 2017 semester. JH was also partially supported by NSF grant DMS-1522158 and the University of California, Davis Dissertation Fellowship. DN was also supported by NSF CAREER award $\#1348721$ and NSF BIGDATA $\#1740325$.
}

\maketitle

\section{Introduction}
We consider solving large-scale systems of linear equations represented by a matrix $A\in\mathbb{R}^{m\times n}$ and vector $\ve{b}\in\mathbb{R}^m$; we use the convention that vectors are bold type, and matrices and scalars are not. We are interested in the highly overdetermined setting, where $m \gg n$, which means the system need not necessarily have a solution.  Iterative solvers like the Kaczmarz method \cite{Kac37:Angenaeherte-Aufloesung,strohmer2009randomized}, Motzkin's method \cite{motzkinschoenberg,agmon,SKM}, and the Gauss-Seidel method \cite{LL10:Randomized-Methods,ma2015convergence}  have become re-popularized recently for such problems since they are particularly efficient in terms of computation and storage.

The Kaczmarz method is a popular iterative solver for overdetermined systems of linear equations and is especially preferred for large-scale systems since it need not ever load the entire system into memory at once.  The method consists of sequential orthogonal projections toward the solution set of a single equation (or subsystem).  Given the system $A\ve{x} = \ve{b}$, the method computes iterates by projecting onto the hyperplane defined by the equation $\ve{a}_i^T \ve{x} = b_i$ where $\ve{a}_i^T$ is a selected row of the matrix $A$ and $b_i$ is the corresponding entry of $\ve{b}$.  The iterates are recursively defined as 
\begin{equation}\label{rk}
\ve{x}_{k+1} = \ve{x}_k + \frac{b_i - \ve{a}_i^T\ve{x}_k}{\|\ve{a}_i\|^2} \ve{a}_i
\end{equation} 
where $\ve{a}_i^T$ is selected from among the rows of $A$ (and the initialization $\ve{x}_0$ is chosen arbitrarily).  The seminal work of Strohmer and Vershynin \cite{strohmer2009randomized} proved exponential convergence for the randomized Kaczmarz method where the $i$th row $\ve{a}_i^T$ is chosen with probability $\|\ve{a}_i\|^2/\|A\|_F^2$.  Since then many variants of the method have been proposed and analyzed for various types of systems, see e.g., \cite{REK,Petra2016,GreedyKaczmarz,RefWorks:498,gower2015randomized,RefWorks:295,richtarik2012iteration} and references therein. 

It is known that the randomized Kaczmarz method converges for inconsistent systems (or equivalently those corrupted by noise) with an error threshold dependent on $A$ and the noise. In \cite{deanna} it was shown that this method has iterates that satisfy:
\begin{equation}\label{RKits}
\mathbb{E}\|\ve{x}_k - \ve{x}_{\text{LS}}\|^2 \leq \left(1 - \frac{\sigma_{\text{min}}^2(A)}{\|A\|_F^2}\right)^k \|\ve{x}_0 -  \ve{x}_{\text{LS}}\|^2 + \frac{\|A\|_F^2}{\sigma_{\text{min}}^2(A)}\|\ve{e}\|_{\infty}^2,
\end{equation}
where here and throughout, the norm without subscript, $\|\cdot\|$, denotes the Euclidean norm, $\sigma_{\text{min}}(A)$ denotes the minimum singular value of $A$, $\|A\|_F$ its Frobenius norm, $ \ve{x}_{\text{LS}}$ the least squares solution and $\ve{e} = \ve{b} - A \ve{x}_{\text{LS}}$ denotes the error term.
There are variants of this method that converge to the least squares solution, e.g. \cite{REK} that utilizes an additional projection step to project off the error term $\ve{e}$.  Additionally, it is known that if a linear system of equations or inequalities is feasible then randomized Kaczmarz will provide a \emph{proof} or \emph{certificate of feasibility}, and there are probabilistic guarantees on how quickly it will do so \cite{SKM}.   

A related but seemingly disjointedly studied work is an approach by Agmon \cite{agmon}, and Motzkin and  Schoenberg \cite{motzkinschoenberg}, re-imagined a few years later as the now famous \emph{perceptron} algorithm \cite{rosenblatt}. These approaches are most often used for feasibility problems, where one seeks a point that resides within some polyhedron described by a system of inequalities; of course, linear systems of equations are one special instance.  Additionally, this so-called Motzkin method has been referred to as the Kaczmarz method with the ``most violated constraint'' or ``maximal-residual'' control \cite{CensorRowAction,GreedyKaczmarz,Petra2016}. As these descriptors suggest, this method iterates in a similar fashion as the Kaczmarz method, but rather than selecting a row of $A$ in sequential or randomized order, it selects the row corresponding to the most violated constraint, as described in Algorithm \ref{alg:MotzSelect}. Starting from any initial point $\ve{x}_0$, the method proceeds as follows.
If the current point $\ve{x}_k$ is a solution, the method terminates; otherwise there must be a constraint $\ve{a}_i^T \ve{x}  = b_i$ that is \emph{most violated}. The constraint defines a hyperplane $H$. The method then projects $\ve{x}_k$ onto this hyperplane as in \eqref{rk}, or perhaps under/over projects using an alternate step-size, see \cite{motzkinschoenberg,SKM} for details. Selecting the most violated constraint is intuitive for feasibility problems or for solving consistent linear systems of equations. In the inconsistent case, it may not always make sense to project onto the most violated constraint; see Figure \ref{motzkinpic} for a simple example of this situation.  However, following \cite{SKM}, we present experimental evidence that suggests Motzkin's method often offers an initially accelerated convergence rate, both for consistent and inconsistent systems of equations.  

\begin{algorithm}
	\caption{Motzkin method (for normalized $A$)}\label{alg:MotzSelect}
	\begin{algorithmic}[1]
		\Procedure{Motzkin}{$A,\ve{b},\ve{x}_0,k,$}
		\For{$j=1,2,...,k$}
		\State $\ve{x}_j = \ve{x}_{j-1} +  (b_{i_j} - \ve{a}_{i_j}^T \ve{x}_{j-1}) \ve{a}_{i_j}$ where $i_j = \underset{i \in [m]}{\argmax} \;\;(\ve{a}_i^T\ve{x}_{j-1} - b_i)^2$.
		\EndFor
		\State \textbf{return} $\ve{x}_k$
		\EndProcedure
	\end{algorithmic}
\end{algorithm}

\subsection{Contribution}
We show that Motzkin's method for systems of linear equations features an initially accelerated convergence rate when the residual has a large dynamic range. We provide bounds for the iterate error which depend on the dynamic range of the residual. These bounds can potentially be used when designing stopping criteria or hybrid approaches.  Next, for a concrete example we show that Gaussian systems of linear equations have large dynamic range and provide bounds on this value.  We extend this to a corollary which shows that the initial convergence rate is highly accelerated and our theoretical bound closely matches experimental evidence.

\section{Accelerated Convergence of Motzkin's Method}

The advantage of the Motzkin method is that by greedily selecting the most violated constraint, the method makes large moves at each iteration, thereby accelerating convergence. One drawback of course, is that it is computationally expensive to compute which constraint is most violated. For this reason, De Loera et al. \cite{SKM} proposed a hybrid batched variant of the method that randomly selects a batch of rows and then computes the most violated from that batch.  This method is quite fast when using parallel computation, but the method often offers accelerated convergence that outweighs the increased computational cost even without parallelization techniques. When the system is inconsistent, however, there is an additional drawback to the Motzkin method because projecting onto the most violated constraint need not move the iterate closer to the desired solution, as already mentioned and shown in Figure \ref{motzkinpic}.  {Our first lemma provides a rule for deciding if a greedy projection offers desirable improvement.}  Here and throughout, we assume that the matrix $A$ has been normalized to have unit row norm, $\|\ve{a}_i\|^2 = 1$, and that the matrix has full column rank, $n$.

\begin{figure}
	\begin{center}
		\definecolor{uuuuuu}{rgb}{0.26666666666666666,0.26666666666666666,0.26666666666666666}
		\definecolor{ttqqqq}{rgb}{0.2,0.,0.}
		\definecolor{qqqqff}{rgb}{0.,0.,1.}
		\begin{tikzpicture}[line cap=round,line join=round,>=triangle 45,x=1.0cm,y=1.0cm]
		\clip(-2.72202174677996,-0.7094879538883636) rectangle (6.994223122074292,5.228576774309162);
		\draw [domain=-5.72202174677996:8.994223122074292] plot(\x,{(--2.1924--1.82*\x)/1.32});
		\draw [domain=-5.72202174677996:8.994223122074292] plot(\x,{(--4.1776--1.04*\x)/2.06});
		\draw [domain=-5.72202174677996:8.994223122074292] plot(\x,{(--4.7488--0.64*\x)/2.24});
		\draw [domain=-5.72202174677996:8.994223122074292] plot(\x,{(-3.0668--4.1*\x)/4.92});
		\draw (2.88,2.72)-- (1.4960971749624123,3.7237097412360525);
		\draw (1.4960971749624123,3.7237097412360525)-- (3.020832631733253,1.8940271931110435);
		\draw (3.020832631733253,1.8940271931110435)-- (2.8625635050027434,2.0088157905199844);
		\draw (2.724983287542531,2.108599244941677)-- (2.5820314794169668,2.2122785783074708);
		\draw (2.4479104538511947,2.3095531682782506)-- (2.3100771247889496,2.4095201981475713);
		\draw (2.8537993169502803,3.0472093384145627) node[anchor=north west] {$\ve{x}_0$};
		\draw (0.9514545491861374,4.129924818267152) node[anchor=north west] {$\ve{x}_1$};
		\draw (3.0704098151186976,2.137647452700909) node[anchor=north west] {$\ve{x}_2$};
		\draw (0.0385368627398501,2.7238872387808794) node[anchor=north west] {$\ve{x}^*$};
		\begin{scriptsize}
		\draw [fill=qqqqff] (0.42,2.24) circle (2.0pt);
		\draw [fill=ttqqqq] (2.88,2.72) circle (1.5pt);
		\draw [fill=uuuuuu] (1.4960971749624123,3.7237097412360525) circle (1.5pt);
		\draw [fill=uuuuuu] (3.020832631733253,1.8940271931110435) circle (1.5pt);
		\end{scriptsize}
		\end{tikzpicture}
	\end{center}
	\caption{An example of a series of projections using the Motzkin approach on an inconsistent system.  Lines represent the hyperplanes consisting of sets $\{\ve{x}: \ve{a}_i^T \ve{x} = b_i\}$ for rows $\ve{a}_i^T$ of $A$, and $\ve{x}^*$ denotes the desired solution.}\label{motzkinpic}
\end{figure}
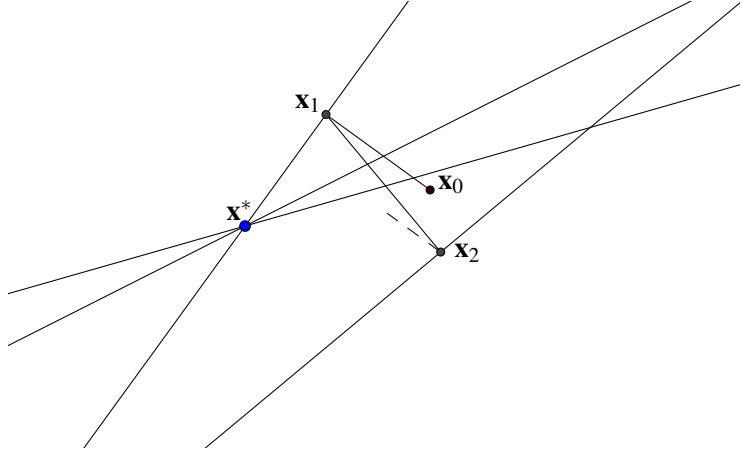

\begin{lemma}\label{lem:motzkin}
	Let $\ve{x}$ denote any desired solution of the system given by matrix $A$ and right hand side $\ve{b}$. If $\ve{e} = A\ve{x}- \ve{b}$ and $\|A\ve{x}_k - \ve{b}\|_\infty > 4\|\ve{e}\|_\infty$ then the next iterate, $\ve{x}_{k+1}$ defined by Algorithm \ref{alg:MotzSelect} satisfies 
	$$\|\ve{x}_{k+1} - \ve{x}\|^2 \le \|\ve{x}_k - \ve{x}\|^2 - \frac{1}{2} \|A\ve{x}_k - \ve{b}\|_\infty^2.$$
\end{lemma}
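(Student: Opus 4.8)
The plan is to prove the inequality by a single direct expansion of $\|\ve{x}_{k+1}-\ve{x}\|^2$ using the update rule in Algorithm \ref{alg:MotzSelect}, and then to control the one cross term that is not sign-definite. Write $i := i_{k+1}$ for the index of the most violated constraint at step $k$, and abbreviate $r_i := \ve{a}_i^T\ve{x}_k - b_i$. By the greedy selection rule and the normalization $\|\ve{a}_i\|^2=1$, this index satisfies $r_i^2 = \max_{j}(\ve{a}_j^T\ve{x}_k - b_j)^2 = \|A\ve{x}_k - \ve{b}\|_\infty^2$, which is the key structural fact we will exploit. From the update $\ve{x}_{k+1} = \ve{x}_k + (b_i - \ve{a}_i^T\ve{x}_k)\ve{a}_i$ we get $\ve{x}_{k+1} - \ve{x} = (\ve{x}_k - \ve{x}) - r_i\ve{a}_i$, so expanding the square and using $\|\ve{a}_i\|^2=1$,
$$\|\ve{x}_{k+1}-\ve{x}\|^2 = \|\ve{x}_k - \ve{x}\|^2 - 2 r_i\, \ve{a}_i^T(\ve{x}_k - \ve{x}) + r_i^2.$$

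Next I would rewrite the inner product in terms of the residual and the error. Since $\ve{e} = A\ve{x}-\ve{b}$, looking at the $i$th coordinate gives $\ve{a}_i^T\ve{x} = b_i + e_i$, hence $\ve{a}_i^T(\ve{x}_k-\ve{x}) = (\ve{a}_i^T\ve{x}_k - b_i) - e_i = r_i - e_i$. Substituting this in, the displayed identity collapses to
$$\|\ve{x}_{k+1}-\ve{x}\|^2 = \|\ve{x}_k-\ve{x}\|^2 - 2 r_i(r_i - e_i) + r_i^2 = \|\ve{x}_k - \ve{x}\|^2 - r_i^2 + 2 r_i e_i.$$
This is the exact per-step identity; everything now hinges on showing the term $2r_i e_i$ cannot undo more than half of the gain $r_i^2 = \|A\ve{x}_k-\ve{b}\|_\infty^2$.

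For that, I would simply bound $2 r_i e_i \le 2|r_i|\,|e_i| \le 2\|A\ve{x}_k-\ve{b}\|_\infty\,\|\ve{e}\|_\infty$, using $|e_i|\le\|\ve{e}\|_\infty$ and, crucially again, that $|r_i| = \|A\ve{x}_k-\ve{b}\|_\infty$. Invoking the hypothesis $\|A\ve{x}_k-\ve{b}\|_\infty > 4\|\ve{e}\|_\infty$, i.e. $\|\ve{e}\|_\infty < \tfrac14\|A\ve{x}_k-\ve{b}\|_\infty$, turns this into $2 r_i e_i \le \tfrac12\|A\ve{x}_k-\ve{b}\|_\infty^2$. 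Plugging back in and using $r_i^2 = \|A\ve{x}_k-\ve{b}\|_\infty^2$ yields $\|\ve{x}_{k+1}-\ve{x}\|^2 \le \|\ve{x}_k-\ve{x}\|^2 - \|A\ve{x}_k-\ve{b}\|_\infty^2 + \tfrac12\|A\ve{x}_k-\ve{b}\|_\infty^2$, which is the claimed bound.

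There is no real analytic obstacle here: the argument is an elementary expansion plus one application of the triangle/Cauchy--Schwarz inequality in a single coordinate. The only thing to be careful about is bookkeeping of signs in the step $\ve{a}_i^T(\ve{x}_k-\ve{x}) = r_i - e_i$, and recognizing that the constant $4$ in the hypothesis is precisely calibrated so that $-r_i^2 + 2r_i e_i \le -\tfrac12 r_i^2$; a weaker separation between $\|A\ve{x}_k-\ve{b}\|_\infty$ and $\|\ve{e}\|_\infty$ would give a smaller (but still positive) guaranteed decrease, so if desired one could state the lemma with a general threshold parameter and recover this version as a special case.
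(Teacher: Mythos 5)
Your proposal is correct and follows essentially the same route as the paper: expand the update to get the exact identity $\|\ve{x}_{k+1}-\ve{x}\|^2 = \|\ve{x}_k-\ve{x}\|^2 - r_i^2 + 2r_ie_i$, then bound the cross term by $2\|A\ve{x}_k-\ve{b}\|_\infty\|\ve{e}\|_\infty$ and invoke the hypothesis $\|A\ve{x}_k-\ve{b}\|_\infty > 4\|\ve{e}\|_\infty$. The bookkeeping (including the sign convention $\ve{e}=A\ve{x}-\ve{b}$ giving $\ve{a}_i^T(\ve{x}_k-\ve{x})=r_i-e_i$) matches the paper's argument exactly.
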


\begin{proof}
	By definition of $\ve{x}_{k+1}$, we have
	\begin{align}\label{arg}
	\|\ve{x}_{k+1} - \ve{x}\|^2 &= \|\ve{x}_k - \ve{x}\|^2 - 2(\ve{a}_{i_{k+1}}^T\ve{x}_k-b_{i_{k+1}})(\ve{a}_{i_{k+1}}^T\ve{x}_k - b_{i_{k+1}} - e_{i_{k+1}}) + (\ve{a}_{i_{k+1}}^T\ve{x}_k - b_{i_{k+1}})^2 \notag
	\\&= \|\ve{x}_k - \ve{x}\|^2 - (\ve{a}_{i_{k+1}}^T\ve{x}_k - b_{i_{k+1}})^2 + 2(\ve{a}_{i_{k+1}}^T\ve{x}_k-b_{i_{k+1}})e_{i_{k+1}}\notag
	\\&\le \|\ve{x}_k - \ve{x}\|^2 - (\ve{a}_{i_{k+1}}^T\ve{x}_k - b_{i_{k+1}})^2 + 2|\ve{a}_{i_{k+1}}^T\ve{x}_k-b_{i_{k+1}}|\cdot|e_{i_{k+1}}|\notag
	\\&= \|\ve{x}_k - \ve{x}\|^2 - \|A\ve{x}_k - \ve{b}\|_\infty^2 + 2 \|A\ve{x}_k - \ve{b}\|_\infty |e_{i_{k+1}}|\notag
	\\&\le \|\ve{x}_k - \ve{x}\|^2 - \|A\ve{x}_k - \ve{b}\|_\infty^2 + 2 \|A\ve{x}_k - \ve{b}\|_\infty \|\ve{e}\|_\infty
	\\&\le \|\ve{x}_k - \ve{x}\|^2 - \frac{1}{2} \|A\ve{x}_k - \ve{b}\|_\infty^2.\notag
	\end{align}
\end{proof}

Note that this tells us that while our residual is still large relative to the error, Motzkin's method can offer good progress in each iteration.  Also, this progress is better than the expected progress offered by Randomized Kaczmarz (RK) when the residual has good dynamic range, in particular when:
$$\frac{1}{2} \|A\ve{x}_k - \ve{b}\|_\infty^2 > \frac{1}{m} \|A\ve{x}_k - \ve{b}\|^2.$$  We can use Lemma \ref{lem:motzkin} to easily obtain the following corollary.

\begin{corollary}\label{cor:Motzrate}
	Let $\ve{x}$ denote any desired solution of the system given by matrix $A$ and right hand side $\ve{b}$ and write $\ve{e} = A\ve{x}-\ve{b}$ as the error term. 
	Then for any given iteration $k$, the iterate defined by Algorithm \ref{alg:MotzSelect} satisfies either (i) or both (ii) and (iii), where
	\begin{align*}
	&\text{(i)}\quad \|\ve{x}_{k+1} - \ve{x}\|^2 \le  \|\ve{x}_k - \ve{x}\|^2 - \frac{1}{2} \|A\ve{x}_k - \ve{b}\|_\infty^2\\
	&\text{(ii)}\quad \|\ve{x}_k - \ve{x}\|^2 \leq 25m\sigma_{\min}^{-2}(A)\|\ve{e}\|_{\infty}^2\\
	&\text{(iii)}\quad \|\ve{x}_{k+1} - \ve{x}\|^2 \le \left(25m\sigma_{\min}^{-2}(A) + 8\right)\|\ve{e}\|_{\infty}^2.
	\end{align*}
	In addition, if the method is run for $K$ iterations with the stopping criterion $\|A\ve{x}_K - \ve{b}\|_\infty \leq 4\|\ve{e}\|_\infty$, then  
	the method exhibits the (possibly highly accelerated) convergence rate
	\begin{align}
	\|\ve{x}_{K} - \ve{x}\|^2 &\le \prod_{k=0}^{K-1}\left(1 - \frac{\sigma_{\min}^2(A)}{4\gamma_k}\right)\cdot\|\ve{x}_0 - \ve{x}\|^2 + 2m\sigma_{\min}^{-2}(A)\|\ve{e}\|_{\infty}^2,\label{cor2a}\\
	&\le \left(1 - \frac{\sigma_{\min}^2(A)}{4m}\right)^K\|\ve{x}_0 - \ve{x}\|^2 + 2m\sigma_{\min}^{-2}(A)\|\ve{e}\|_{\infty}^2,\label{cor2}
	\end{align}
	with final error satisfying (ii). 
	Here $\gamma_k$ bounds the dynamic range of the $k$th residual, $\gamma_k := \frac{\|A\ve{x}_k-A\ve{x}\|^2}{\|A\ve{x}_k-A\ve{x}\|_{\infty}^2}$.
\end{corollary}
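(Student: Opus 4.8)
The plan is to dispatch the per‑iteration trichotomy first and then assemble the two rate bounds by telescoping. Everything is governed by a single comparison, $\|A\ve{x}_k-\ve{b}\|_\infty$ versus $4\|\ve{e}\|_\infty$, which is precisely the hypothesis of Lemma~\ref{lem:motzkin} and the stopping criterion. Fix an iteration $k$. If $\|A\ve{x}_k-\ve{b}\|_\infty > 4\|\ve{e}\|_\infty$, then Lemma~\ref{lem:motzkin} gives (i) verbatim. Otherwise $\|A\ve{x}_k-\ve{b}\|_\infty \le 4\|\ve{e}\|_\infty$, and I would prove (ii) and (iii). For (ii) I would use the identity $A(\ve{x}_k-\ve{x}) = (A\ve{x}_k-\ve{b}) - \ve{e}$ together with the triangle inequality and $\|\cdot\| \le \sqrt{m}\,\|\cdot\|_\infty$ to get $\sigma_{\min}(A)\|\ve{x}_k-\ve{x}\| \le \|A(\ve{x}_k-\ve{x})\| \le \sqrt{m}\,(\|A\ve{x}_k-\ve{b}\|_\infty+\|\ve{e}\|_\infty) \le 5\sqrt{m}\,\|\ve{e}\|_\infty$, where the leftmost inequality is full column rank; squaring yields (ii). For (iii) I would re‑run the chain in the proof of Lemma~\ref{lem:motzkin} only as far as $\|\ve{x}_{k+1}-\ve{x}\|^2 \le \|\ve{x}_k-\ve{x}\|^2 - \|A\ve{x}_k-\ve{b}\|_\infty^2 + 2\|A\ve{x}_k-\ve{b}\|_\infty\|\ve{e}\|_\infty$ (this line does not use the hypothesis), discard the nonpositive term, bound $2\|A\ve{x}_k-\ve{b}\|_\infty\|\ve{e}\|_\infty \le 8\|\ve{e}\|_\infty^2$ via the case assumption, and insert (ii).

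For the rate, since the method stops the first time the criterion holds (at step $K$), the criterion fails at every $k<K$, i.e.\ $\|A\ve{x}_k-\ve{b}\|_\infty > 4\|\ve{e}\|_\infty$ for $k=0,\dots,K-1$, so case (i) holds at each such step. The one real step is to turn the additive decrement in (i) into a multiplicative contraction. From $A(\ve{x}_k-\ve{x}) = (A\ve{x}_k-\ve{b})-\ve{e}$ and $\|\ve{e}\|_\infty < \tfrac14\|A\ve{x}_k-\ve{b}\|_\infty$ I would get $\|A(\ve{x}_k-\ve{x})\|_\infty \le \tfrac54\|A\ve{x}_k-\ve{b}\|_\infty$, hence $\|A\ve{x}_k-\ve{b}\|_\infty^2 \ge \tfrac{16}{25}\|A(\ve{x}_k-\ve{x})\|_\infty^2 \ge \tfrac12\|A(\ve{x}_k-\ve{x})\|_\infty^2$; then the definition $\gamma_k = \|A(\ve{x}_k-\ve{x})\|^2/\|A(\ve{x}_k-\ve{x})\|_\infty^2$ together with full column rank give $\|A\ve{x}_k-\ve{b}\|_\infty^2 \ge \tfrac{1}{2\gamma_k}\|A(\ve{x}_k-\ve{x})\|^2 \ge \tfrac{\sigma_{\min}^2(A)}{2\gamma_k}\|\ve{x}_k-\ve{x}\|^2$. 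Plugging into (i) gives $\|\ve{x}_{k+1}-\ve{x}\|^2 \le \big(1-\tfrac{\sigma_{\min}^2(A)}{4\gamma_k}\big)\|\ve{x}_k-\ve{x}\|^2$, and telescoping over $k=0,\dots,K-1$ produces the product in \eqref{cor2a}; the trailing $2m\sigma_{\min}^{-2}(A)\|\ve{e}\|_\infty^2$ is a nonnegative slack term kept only to match the form of the randomized Kaczmarz bound \eqref{RKits}, while the substantive ``final error'' claim is that $\ve{x}_K$, meeting the stopping criterion, obeys (ii). Inequality \eqref{cor2} then follows from \eqref{cor2a} because $\gamma_k \le m$ always, so every factor $1-\tfrac{\sigma_{\min}^2(A)}{4\gamma_k} \le 1-\tfrac{\sigma_{\min}^2(A)}{4m}$.

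I do not expect a conceptual obstacle; the care is entirely in the $\ell_\infty$/$\ell_2$ bookkeeping. The key point to verify is that the constant $4$ in Lemma~\ref{lem:motzkin} is tuned so that passing from $\|A\ve{x}_k-\ve{b}\|_\infty$ to $\|A(\ve{x}_k-\ve{x})\|_\infty$ loses only a factor $\tfrac54$, small enough that the $\tfrac12$ supplied by Lemma~\ref{lem:motzkin} survives as the clean $\tfrac14$ in the contraction rate (the inequality $\tfrac{16}{25}\ge\tfrac12$ is precisely what makes this go through), and similarly that the estimates (ii) and (iii) come out with exactly the stated constants $25m\sigma_{\min}^{-2}(A)$ and $+8$. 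One must also consistently remember that $\gamma_k$ is defined via the noiseless residual $A\ve{x}_k-A\ve{x}$ rather than $A\ve{x}_k-\ve{b}$, which is why the identity $A(\ve{x}_k-\ve{x}) = (A\ve{x}_k-\ve{b})-\ve{e}$ is invoked at every conversion.
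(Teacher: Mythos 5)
Your proof is correct, and the per-iteration trichotomy (i)--(iii) is argued exactly as in the paper: (i) from Lemma~\ref{lem:motzkin}, (ii) via $\sigma_{\min}(A)\|\ve{x}_k-\ve{x}\| \le \|A(\ve{x}_k-\ve{x})\| \le \sqrt{m}(\|A\ve{x}_k-\ve{b}\|_\infty+\|\ve{e}\|_\infty) \le 5\sqrt{m}\|\ve{e}\|_\infty$, and (iii) by truncating the chain \eqref{arg} and inserting (ii). Where you genuinely diverge is the rate: the paper only uses the \emph{conclusion} (i) at each step $k<K$, writes $\|A\ve{x}_{k-1}-\ve{b}\|_\infty^2 = \|(A\ve{x}_{k-1}-A\ve{x})-\ve{e}\|_\infty^2 \ge \tfrac12\|A\ve{x}_{k-1}-A\ve{x}\|_\infty^2 - \|\ve{e}\|_\infty^2$ (its ``Jensen'' step, line \eqref{loser}), and so obtains a contraction \emph{plus} an additive $\tfrac12\|\ve{e}\|_\infty^2$ per iteration, which it then sums as a geometric series (using $\gamma_k\le m$) to produce the horizon term $2m\sigma_{\min}^{-2}(A)\|\ve{e}\|_\infty^2$ in \eqref{cor2a}--\eqref{cor2}. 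You instead exploit the \emph{failure} of the stopping criterion at every $k<K$, i.e.\ $\|\ve{e}\|_\infty < \tfrac14\|A\ve{x}_k-\ve{b}\|_\infty$, to absorb the noise into the constant via $\|A\ve{x}_k-\ve{b}\|_\infty^2 \ge \tfrac{16}{25}\|A(\ve{x}_k-\ve{x})\|_\infty^2 \ge \tfrac12\|A(\ve{x}_k-\ve{x})\|_\infty^2$, yielding a pure contraction with no additive term; \eqref{cor2a} then holds with the $2m\sigma_{\min}^{-2}(A)\|\ve{e}\|_\infty^2$ term as mere nonnegative slack, so your route actually proves a slightly stronger bound for this corollary. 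The trade-off is that the paper's intermediate recursion \eqref{loser}/\eqref{last1} requires only the decrement (i), not the strict violation of the criterion at every prior step, and it is reused verbatim as the launching point of the Gaussian corollary \eqref{grrr}, which your streamlined contraction would not furnish. Both readings of the stopping rule (that $K$ is the first index at which $\|A\ve{x}_K-\ve{b}\|_\infty \le 4\|\ve{e}\|_\infty$) coincide with the paper's.
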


\begin{proof}
	We consider two cases, depending on whether $\|A\ve{x}_k -\ve{b}\|_\infty > 4\|\ve{e}\|_\infty$ or $\|A\ve{x}_k - \ve{b}\|_\infty \leq 4\|\ve{e}\|_\infty$. If the former holds, then (i) is valid by Lemma \ref{lem:motzkin}.  If instead the latter holds, then we first obtain (ii) by the simple argument
	\begin{align*}
	\|\ve{x}_{k} - \ve{x}\|^2 &\leq \sigma_{\min}^{-2}(A)\|A\ve{x}_k - A\ve{x}\|^2\\
	&\leq  \sigma_{\min}^{-2}(A)m\|A\ve{x}_{k} - A\ve{x}\|_{\infty}^2\\  
	&\leq \sigma_{\min}^{-2}(A)m\left(\|A\ve{x}_{k} - \ve{b}\|_{\infty}^2 + 2\|A\ve{x}_k - \ve{b}\|_\infty \|\ve{e}\|_\infty + \|\ve{e}\|_{\infty}^2\right)\\
	&\leq \sigma_{\min}^{-2}(A)m\left(16\|\ve{e}\|_{\infty}^2 + 8\|\ve{e}\|_{\infty}^2 + \|\ve{e}\|_{\infty}^2\right)\\
	&= 25m\sigma_{\min}^{-2}(A)\|\ve{e}\|_{\infty}^2.
	\end{align*}
	To obtain (iii) still in this latter case, we continue from \eqref{arg} showing
	\begin{align*}
	\|\ve{x}_{k+1} - \ve{x}\|^2 &\leq \|\ve{x}_k - \ve{x}\|^2 - \|A\ve{x}_k - \ve{b}\|_\infty^2 + 2 \|A\ve{x}_k - \ve{b}\|_\infty \|\ve{e}\|_\infty\\
	&\leq 25m\sigma_{\min}^{-2}(A)\|\ve{e}\|_{\infty}^2  - \|A\ve{x}_k - \ve{b}\|_\infty^2 + 2 \|A\ve{x}_k - \ve{b}\|_\infty \|\ve{e}\|_\infty\\
	&\leq 25m\sigma_{\min}^{-2}(A)\|\ve{e}\|_{\infty}^2  + 2 \|A\ve{x}_k - \ve{b}\|_\infty \|\ve{e}\|_\infty\\
	&\leq 25m\sigma_{\min}^{-2}(A)\|\ve{e}\|_{\infty}^2  + 8 \|\ve{e}\|_\infty^2\\
	&= \left(25m\sigma_{\min}^{-2}(A)+ 8 \right) \|\ve{e}\|_\infty^2.
	\end{align*}
	To prove \eqref{cor2a} and \eqref{cor2}, we first note that by choice of stopping criterion, (i) holds for all $0\leq k \leq K$. Thus for all such $k$, we have
	\begin{align}
	\|\ve{x}_{k} - \ve{x}\|^2 &\leq \|\ve{x}_{k-1} - \ve{x}\|^2 - \frac{1}{2}\|A\ve{x}_{k-1} - \ve{b}\|_\infty^2\notag\\
	&= \|\ve{x}_{k-1} - \ve{x}\|^2 - \frac{1}{2}\|(A\ve{x}_{k-1} - A\ve{x}) - \ve{e}\|_\infty^2\notag\\
	&\leq \|\ve{x}_{k-1} - \ve{x}\|^2 - \frac{1}{4}\|A\ve{x}_{k-1} - A\ve{x}\|_\infty^2 + \frac{1}{2}\|\ve{e}\|_{\infty}^2 \label{loser}\\
	&= \|\ve{x}_{k-1} - \ve{x}\|^2 - \frac{1}{4\gamma_{k-1}}\|A\ve{x}_{k-1} - A\ve{x}\|^2 + \frac{1}{2}\|\ve{e}\|_{\infty}^2 \notag\\
	&\leq \|\ve{x}_{k-1} - \ve{x}\|^2 - \frac{\sigma_{\min}^{2}(A)}{4\gamma_{k-1}}\|\ve{x}_{k-1} - \ve{x}\|^2 + \frac{1}{2}\|\ve{e}\|_{\infty}^2\notag \\
	&= \left(1 - \frac{\sigma_{\min}^{2}(A)}{4\gamma_{k-1}}\right)\|\ve{x}_{k-1} - \ve{x}\|^2 + \frac{1}{2}\|\ve{e}\|_{\infty}^2,\label{last1}
	\end{align}
	where the first line follows from (i), the third from Jensen's inequality, and the fifth  from properties of singular values.

	Iterating the relation given by \eqref{last1} recursively yields\footnote{We use the convention that an empty sum or product equates to one.}
	\begin{align*}
	\|\ve{x}_{K} - \ve{x}\|^2 &\leq \prod_{k=0}^{K-1}\left(1 - \frac{\sigma_{\min}^{2}(A)}{4\gamma_k}\right)\cdot\|\ve{x}_{0} - \ve{x}\|^2 + \sum_{j=0}^{K-1} \prod_{k=0}^{j-1}\left(1 - \frac{\sigma_{\min}^{2}(A)}{\gamma_k}\right)\frac{1}{2}\|\ve{e}\|_{\infty}^2\\
	&\leq \prod_{k=0}^{K-1}\left(1 - \frac{\sigma_{\min}^{2}(A)}{4\gamma_k}\right)\cdot\|\ve{x}_{0} - \ve{x}\|^2 + \sum_{j=0}^{K-1}\left(1 - \frac{\sigma_{\min}^{2}(A)}{4m}\right)^j\frac{1}{2}\|\ve{e}\|_{\infty}^2\\
	&\leq  \prod_{k=0}^{K-1}\left(1 - \frac{\sigma_{\min}^{2}(A)}{4\gamma_k}\right)\cdot\|\ve{x}_{0} - \ve{x}\|^2 + 2m\sigma_{\min}^{-2}(A)\|\ve{e}\|_{\infty}^2\\
	&\leq  \left(1 - \frac{\sigma_{\min}^{2}(A)}{4m}\right)^K\|\ve{x}_{0} - \ve{x}\|^2 + 2m\sigma_{\min}^{-2}(A)\|\ve{e}\|_{\infty}^2,
	\end{align*}
	where the second and fourth inequalities follow from the simple bound $\gamma_k \leq {m}$ and the third by bounding above by the infinite sum. The last two inequalities complete the proof of \eqref{cor2a} and \eqref{cor2}.
\end{proof}

Note that Lemma \ref{lem:motzkin} and Corollary \ref{cor:Motzrate} are true for any desired solution, $\ve{x}$.  Here the desired solution could be the least squares solution or generally any other point.  However, the residual of the desired solution, $A\ve{x} - \ve{b}$, determines the error $\ve{e}$ and the final error of Motzkin's method.

\begin{figure}
	\begin{center}
		\includegraphics[width=0.6\textwidth]{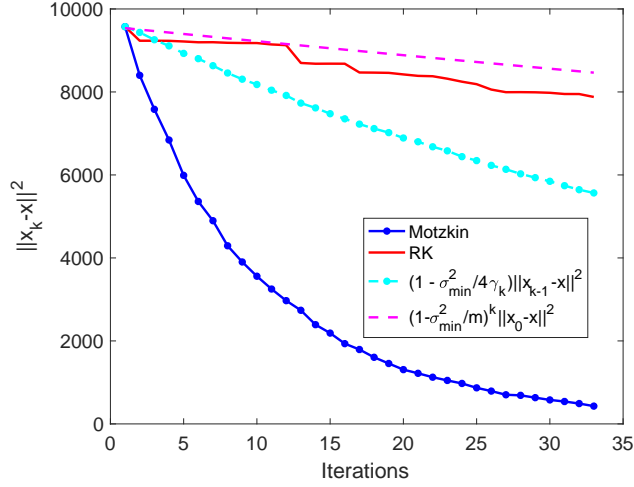}
		\caption{Convergence of Motzkin's method and RK on correlated system with corresponding theoretical bounds.}\label{fig:corrconvergence}
	\end{center}
\end{figure}

We note that the convergence rate given by \eqref{cor2a} yields a significant improvement over that given by \eqref{RKits} when the dynamic range of many residuals is large, i.e. when $\gamma_k \ll m$ for many iterations $k$.  In Figure \ref{fig:corrconvergence}, we present the convergence of Motzkin and RK on a random system which before normalization is defined by matrix $A \in \mathbb{R}^{5000 \times 100}$ with $a_{ij} \sim \mathcal{N}(1,0.5)$ and $\ve{b} = A\mathbf{1} + \ve{\epsilon}$ where $\mathbf{1}$ denotes the all ones vector and $\ve{\epsilon}$ is a Gaussian vector, and the corresponding theoretical bounds.  Figure \ref{fig:Netlib} presents plots providing the convergence of Motzkin and RK, and the corresponding theoretical bounds on systems of equations defined by problems from the \texttt{Netlib} linear programming benchmark set \cite{Netlib}. These problems contain naturally under-determined systems, which we transform into overdetermined, inconsistent systems with nearly the same least-squares solution. We transform the problem, originally given by the underdetermined systems of equations $A\ve{x} = \ve{b}$ by adding equations to form $$\begin{bmatrix} A \\ I \end{bmatrix} \ve{x} = \begin{bmatrix} \ve{b} \\ \ve{x}_{LS} + \ve{\epsilon} \end{bmatrix}$$ where $\ve{x}_{LS}$ is the least-norm solution of $A\ve{x} = \ve{b}$ and $\ve{\epsilon}$ is a Gaussian vector with small variance, and normalizing the resulting system.  Each problem has very small error which is distributed relatively uniformly, thus there are many iterations in which the theoretical bounds hold.  The resulting matrix for problem \texttt{agg} is of size $1103 \times 615$, the resulting matrix for problem \texttt{agg2} is of size $1274 \times 758$, the resulting matrix for problem \texttt{agg3} is also of size $1274 \times 758$, and the resulting matrix for problem \texttt{bandm} is of size $777 \times 472$. These plots are only for the iterations before the stopping criterion is met.

\begin{figure}[h]
	\includegraphics[width=0.5\textwidth]{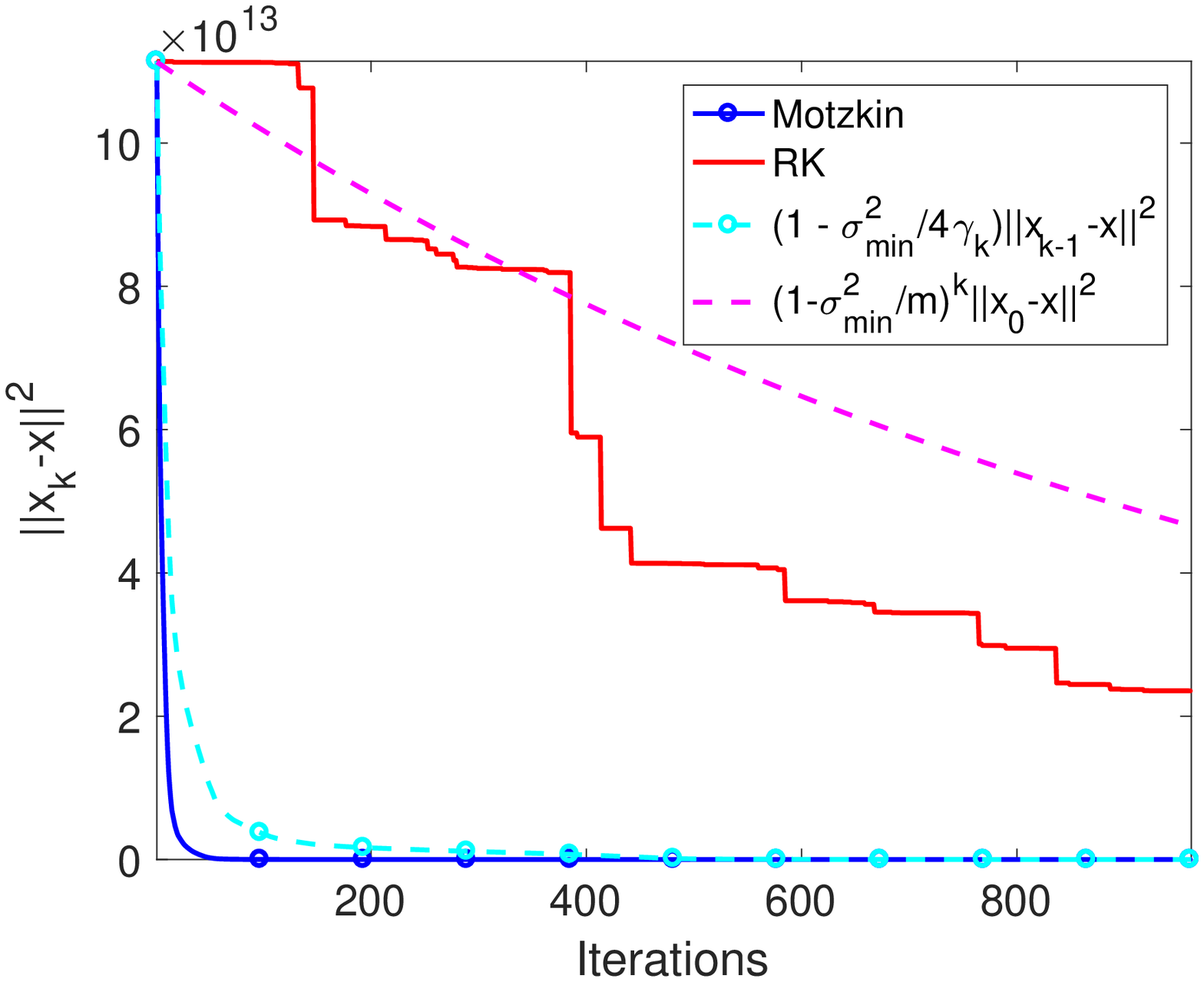}%
	\includegraphics[width=0.5\textwidth]{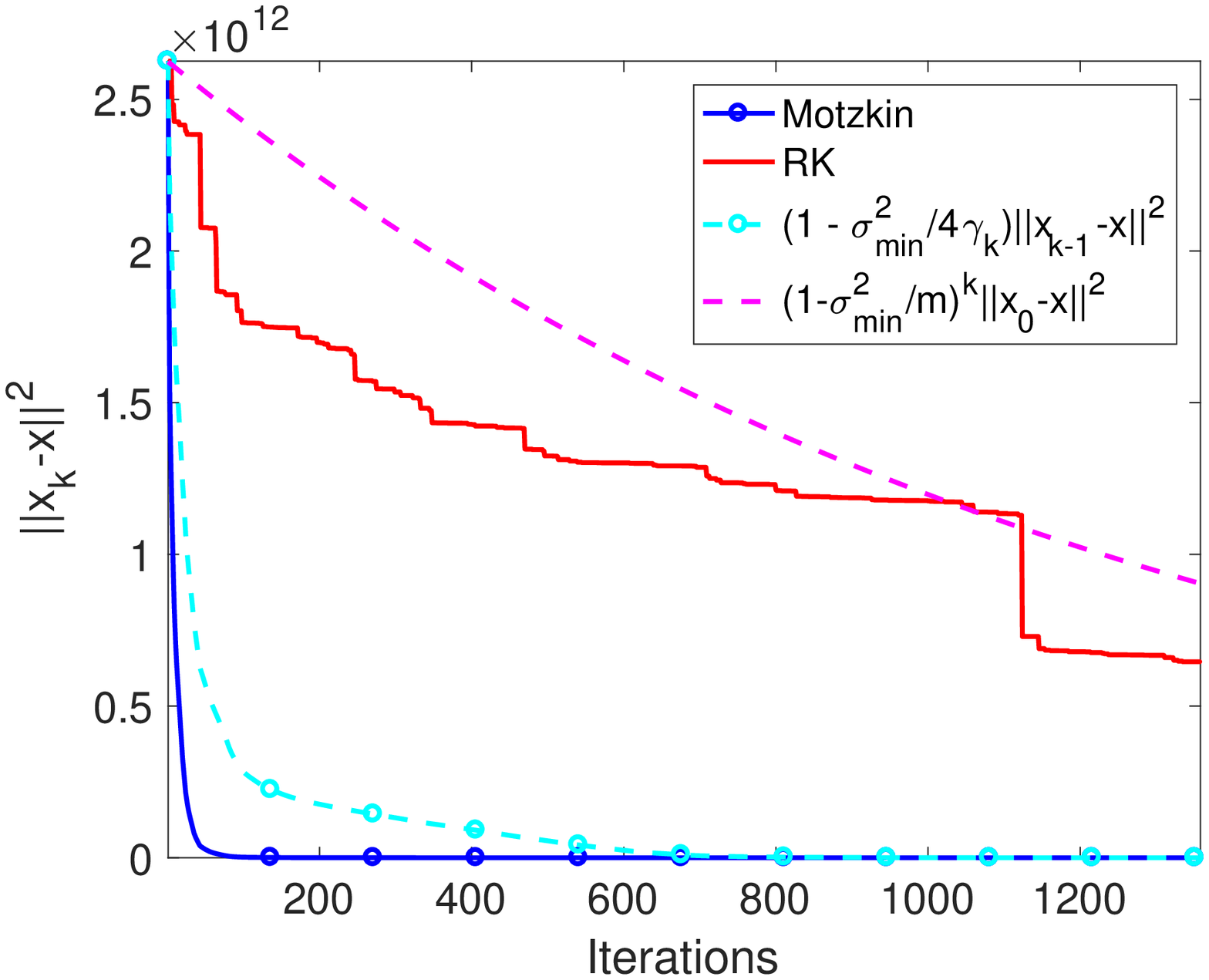}%
	\\\includegraphics[width=0.5\textwidth]{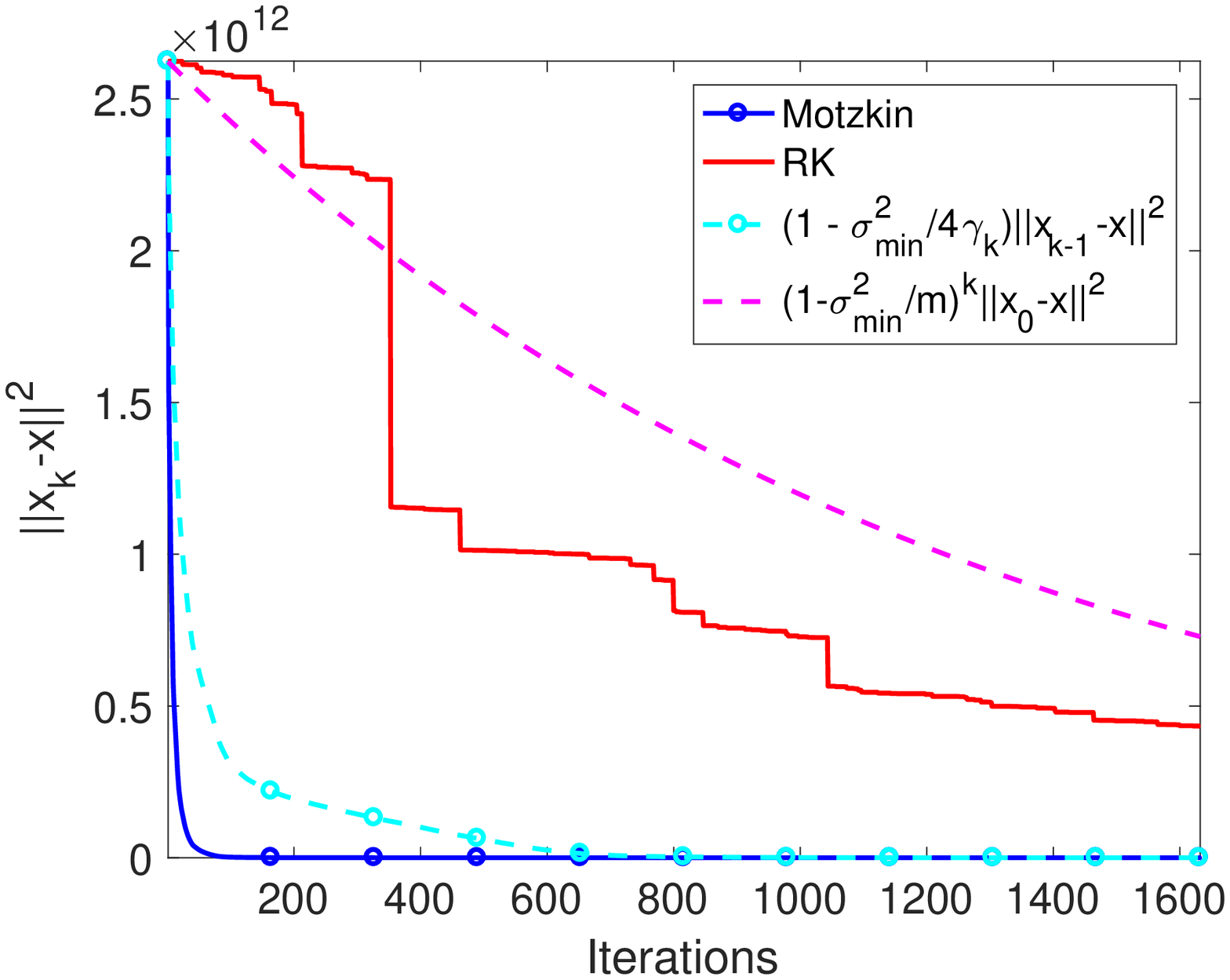}%
	\includegraphics[width=0.5\textwidth]{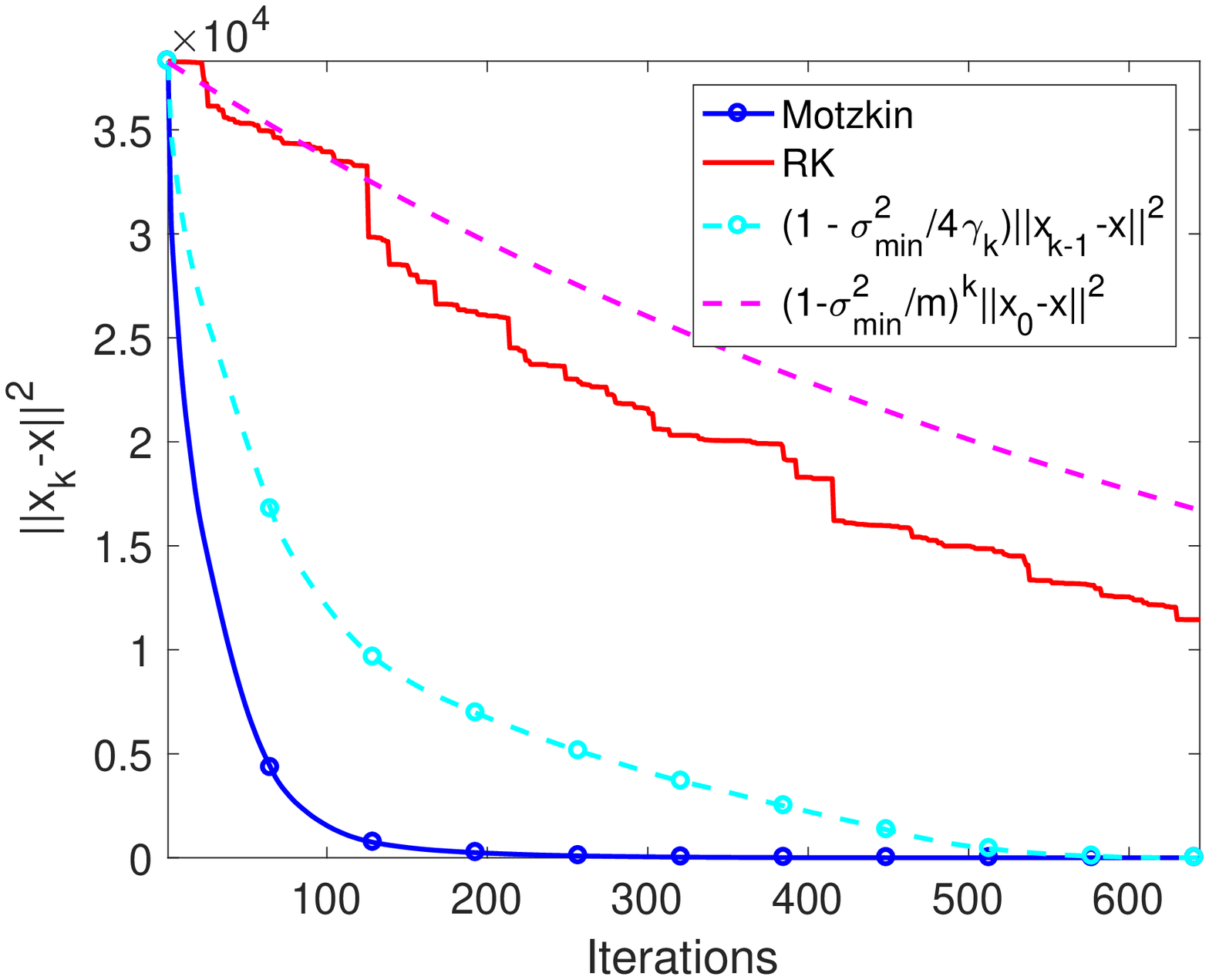}%
	\caption{Convergence of Motzkin's method and RK, and corresponding theoretical bounds for \texttt{Netlib} linear programming problems.  Upper left: \texttt{agg}; upper right: \texttt{agg2}; lower left: \texttt{agg3}; lower right: \texttt{bandm}.}
	\label{fig:Netlib}
\end{figure}  

In Table \ref{tab:Netlib}, we include the CPU computation time (computed with the Matlab function \texttt{cputime}) required to reach 
residual norm, $\|A\ve{x}_k-\ve{b}\|_\infty$, less than $4\|A\ve{x}_{\text{LS}} -\ve{b}\|_\infty$; that is to compute $\ve{x}_k$ with $\|A\ve{x}_k-\ve{b}\|_\infty \le 4 \|A\ve{x}_{\text{LS}}-\ve{b}\|_\infty$.  
We include computation times averaged over 10 trials for Motzkin's method and the Randomized Kaczmarz method on the \texttt{Netlib} problems \texttt{agg}, \texttt{agg2}, \texttt{agg3}, and \texttt{bandm}.  Note that this computation is performed with no parallelization implemented for Motzkin's method, which means that each iteration of Motzkin's method is much more costly than that of RK.  Nevertheless, Motzkin's method outperforms RK on some of the selected \texttt{Netlib} problems.  However, this is not the focus of this paper, as the acceleration described in Lemma \ref{lem:motzkin} does not necessarily guarantee Motzkin's method a computational advantage if the iterations are significantly more costly than those of RK.

This acceleration is in force until the stopping criterion given in the corollary.  This bound therefore, can be used to design such stopping criteria; one could design an approach for example that utilizes the Motzkin method until reaching this threshold, and then switching to the traditional RK selection strategy to reduce the convergence horizon.  
In Figure \ref{fig:MSGDvsRK}, we see that Motzkin outperforms RK for the initial iterations (while $\|A \ve{x}_k - \ve{b}\|_\infty \gg \|\ve{e}\|_\infty$) on a system with Gaussian noise.  Here, before normalization, the system consists of Gaussian matrix $A \in \mathbb{R}^{50000 \times 100}$ and right-hand side $\ve{b} = A\mathbf{1} + \ve{e}$ where $\mathbf{1}$ is the vector of all ones and $\ve{e}$ is a Gaussian vector. However, for a system with sparse, large magnitude error, Motzkin does not perform as well in the long run, as it suffers from a worse \textit{convergence horizon} than RK. Here, before normalization, the system consists of Gaussian matrix $A \in \mathbb{R}^{50000 \times 100}$ and right-hand side $\ve{b} = A\mathbf{1} + 15\sum_{j \in S} \ve{e}_j$ where $\ve{e}_j$ denotes the $j$th coordinate vector and $S$ is a uniform random sample of 50 indices.

\begin{table}
	\begin{center}
		\begin{tabular}{c || c || c | c}
			Problem & $4\|A\ve{x}_{\text{LS}}-\ve{b}\|_\infty$ & Motzkin (s) & RK (s) \\
			\hline
			\texttt{agg} & $2.16*10^{-8}$ & 0.723 & 0.836 \\ 
			\texttt{agg2} & $2.77*10^{-9}$ & 1.610 & 1.178 \\
			\texttt{agg3} & $5.85*10^{-9}$ & 2.121 & 1.195 \\
			\texttt{bandm} & $2.98*10^{-13}$ & 0.191 & 0.474 
		\end{tabular}
	\end{center}
	\caption{Average CPU computation times (s) required to compute iterate $\ve{x}_k$ with $\|A\ve{x}_k-\ve{b}\|_\infty \le 4\|A\ve{x}_{\text{LS}}-\ve{b}\|_\infty$ for the four \texttt{Netlib} problems, \texttt{agg}, \texttt{agg2}, \texttt{agg3}, and \texttt{bandm}.  These values are averaged over 10 trials.}
	\label{tab:Netlib}
\end{table}  

\begin{figure}
	\includegraphics[width=0.5\textwidth]{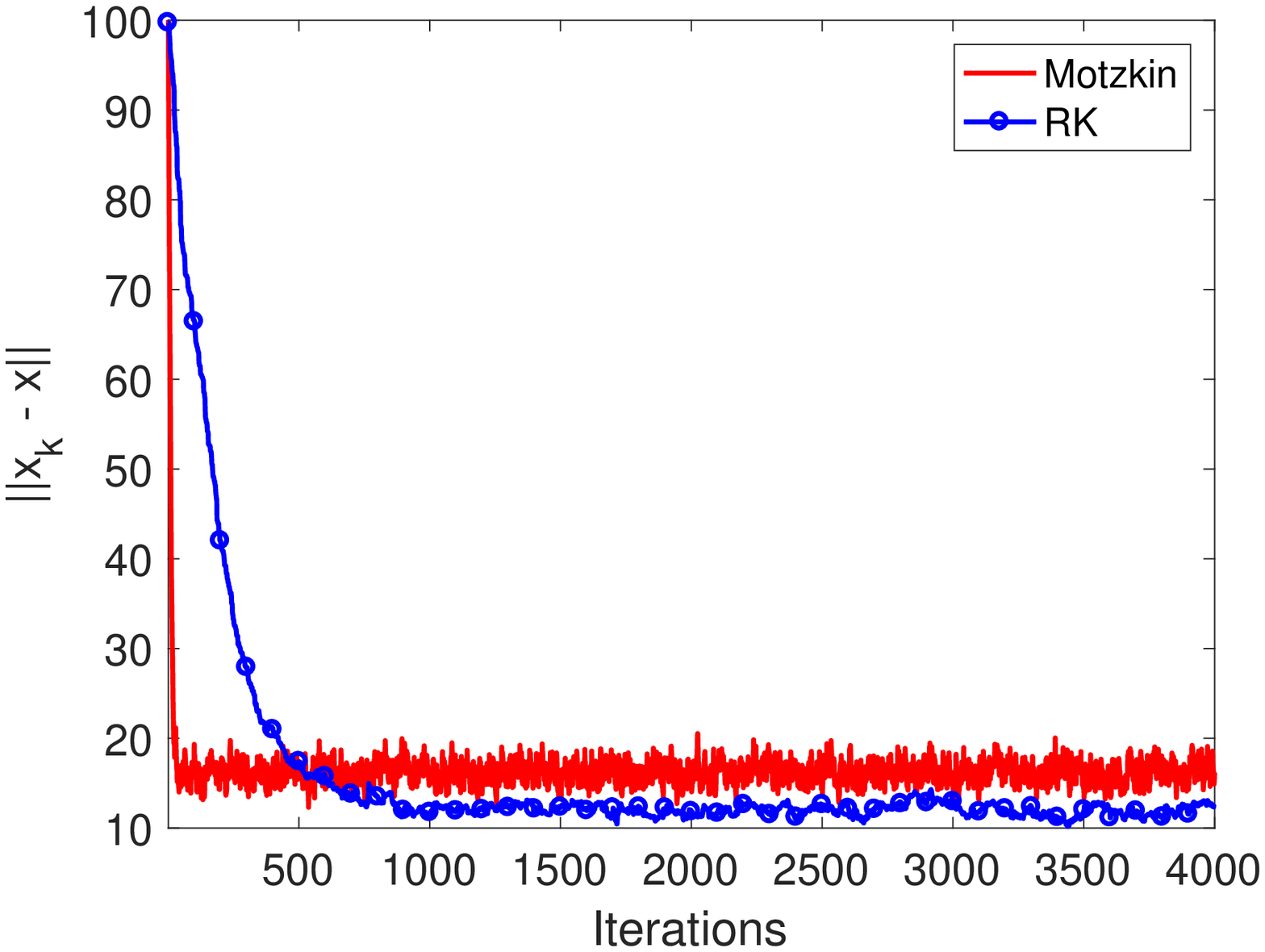}\includegraphics[width=0.5\textwidth]{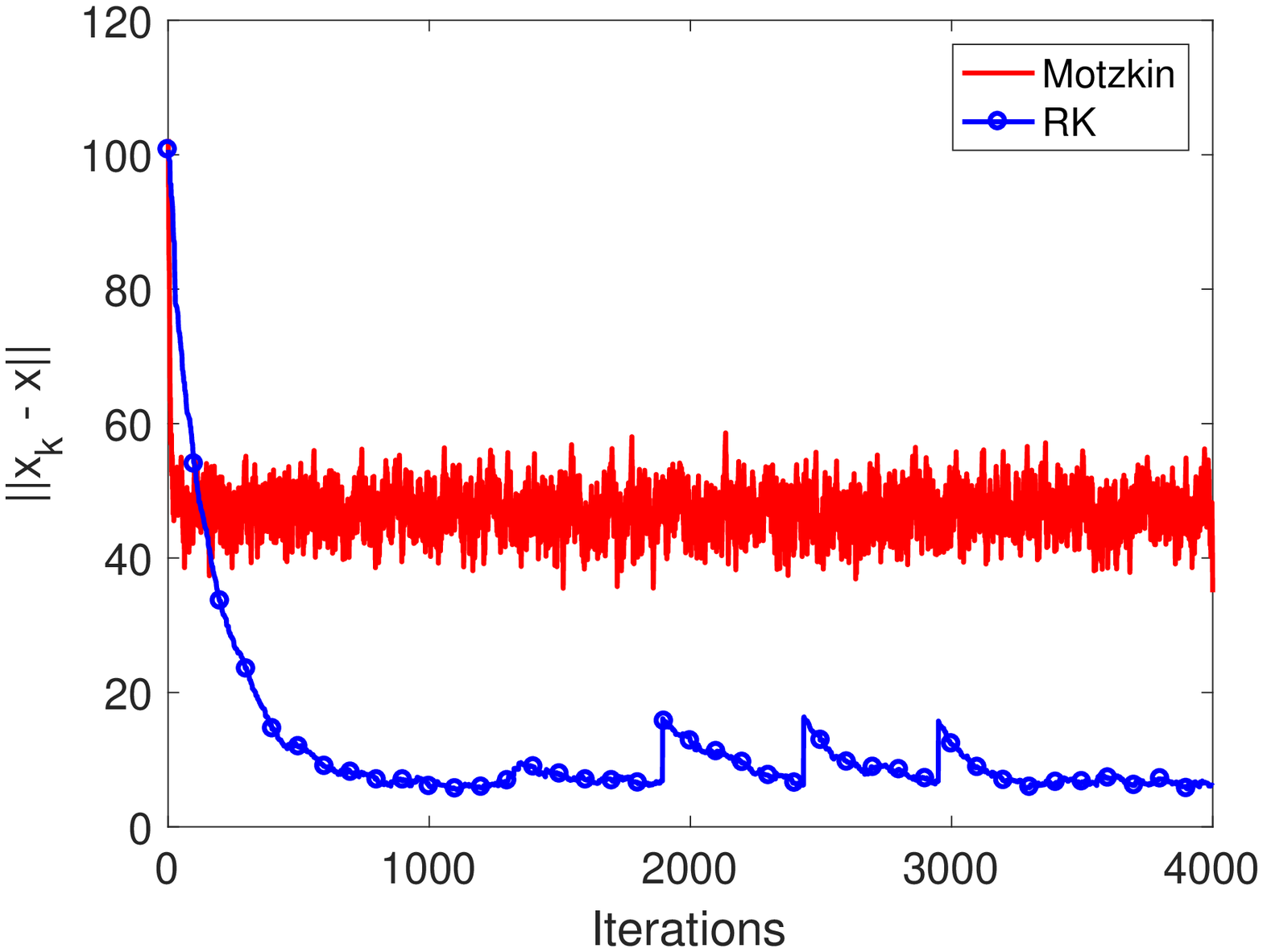}
	\caption{Left: Motzkin's method vs. RK distance from least-squares solution for a Gaussian system with Gaussian noise.  Right: Motzkin's method vs. RK distance from least-squares solution for a Gaussian system with sparse, `spiky' noise.}
	\label{fig:MSGDvsRK}
\end{figure}

To capitalize on this accelerated convergence, one needs knowledge of an upper bound $\|\ve{e}\|_{\infty} \leq \beta$, in which case the stopping criterion of $\|A\ve{x}_k - \ve{b}\|_{\infty} \leq 4\beta$ guarantees the accelerated convergence of \eqref{cor2a} and a final error of $\|\ve{x}_k - \ve{x}\|^2 \leq 25m\sigma_{\min}^{-2}(A)\beta^2$.  Indeed, one quickly verifies that when  $\|A\ve{x}_k - \ve{b}\|_{\infty} \leq 4\beta$, we have
\begin{align*}
\|\ve{x}_k - \ve{x}\| &\leq \sigma_{\min}^{-1}(A)\|A\ve{x}_k - A\ve{x}\|\\
&\leq \sqrt{m}\sigma_{\min}^{-1}(A)\|A\ve{x}_k - A\ve{x}\|_{\infty}\\
&\leq \sqrt{m}\sigma_{\min}^{-1}(A)\left(\|A\ve{x}_k - \ve{b}\|_{\infty} + \|\ve{e}\|_{\infty}\right)\\
&\leq \sqrt{m}\sigma_{\min}^{-1}(A)\left(4\beta + \beta\right).
\end{align*}

\begin{figure}
	\begin{center}
		\includegraphics[width=0.5\textwidth]{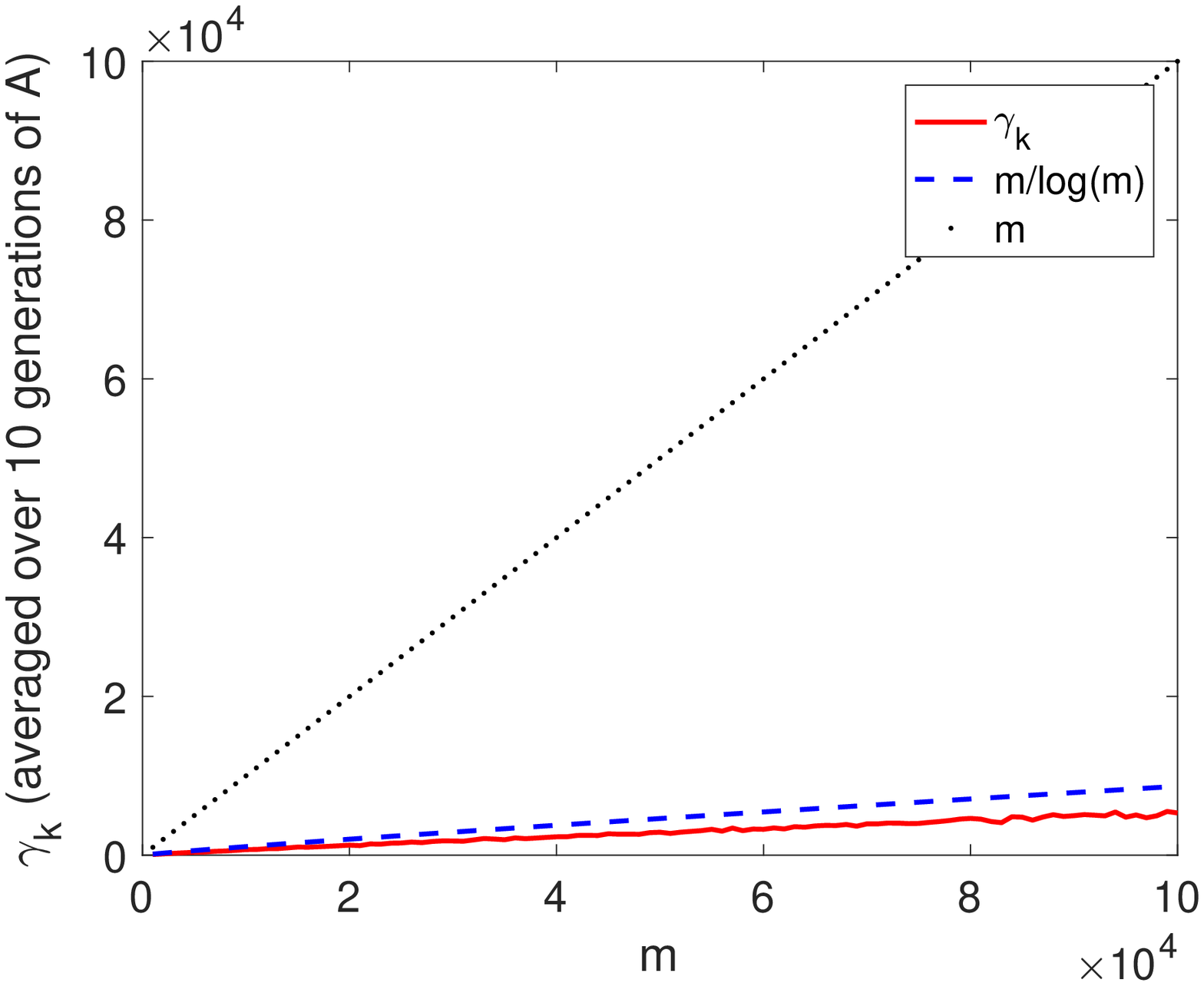}\includegraphics[width=0.5\textwidth]{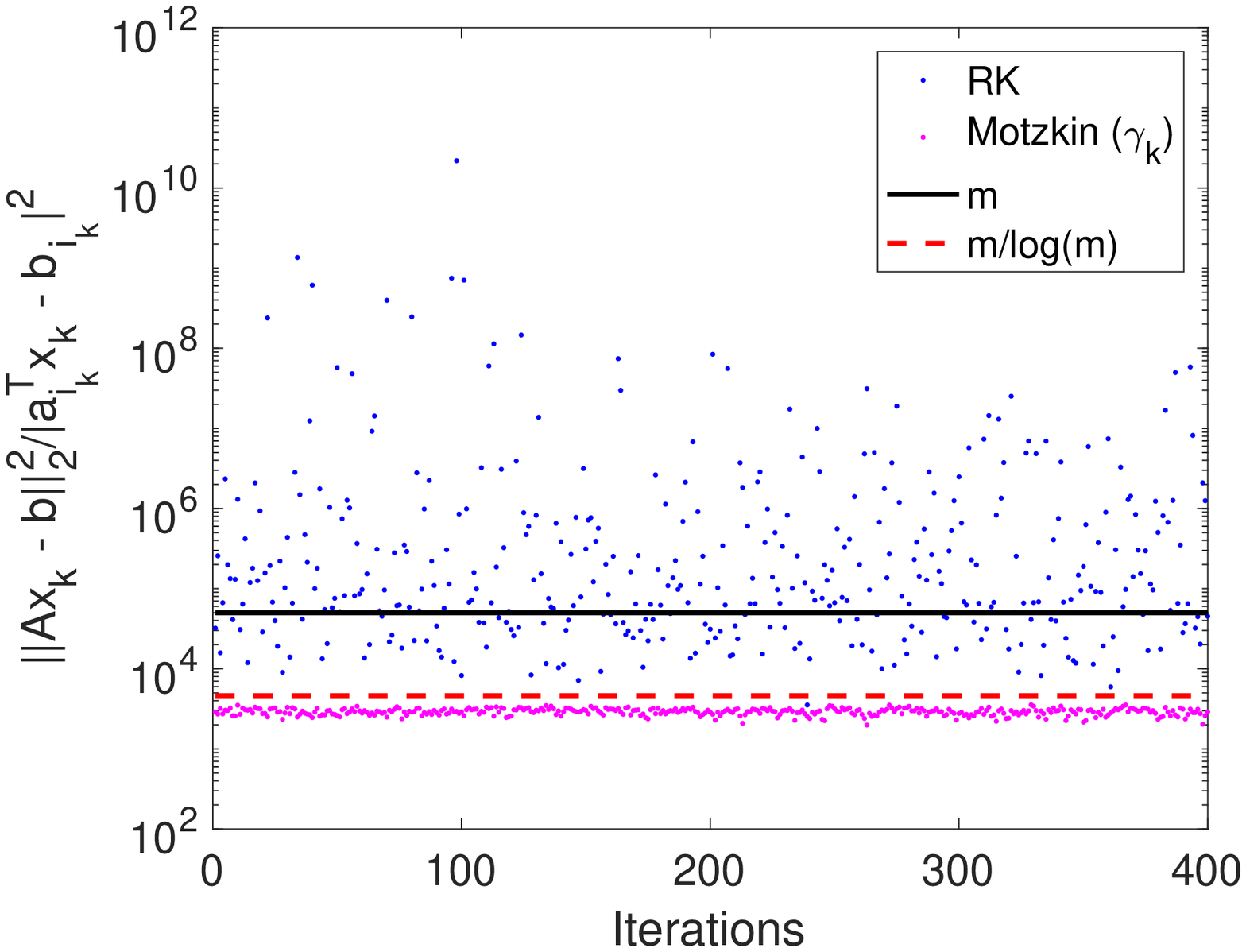}
	\end{center}
	\caption{Left: Average $\gamma_k$ values for various choices of row dimension, $m$, of normalized Gaussian $A \in \R^{m \times 100}$.  Right: An example of the values $\gamma_k$ for a single run of Motzkin's method and the corresponding ratio for RK, the matrix is a $50000 \times 100$ Gaussian. The index $i_k$ denotes the index chosen in the $k$th iteration of each method.  The horizontal lines denote the values ${m}$ and ${m/\log(m)}$.  We see acceleration when $\gamma_k < {m}$.}\label{residualpic}
\end{figure}

Since the acceleration of the method occurs when many of the terms $\gamma_k$ are small, we plot an example in Figure \ref{residualpic}. As expected, many terms are bounded away from $m$. We will analyze this in the Gaussian case further below.

We also only expect this acceleration to be present while the condition of Lemma \ref{lem:motzkin} is in force (i.e. prior to the stopping condition given in the corollary). Once the condition of Lemma \ref{lem:motzkin} is no longer satisfied, selecting greedily will select those entries of the residual which have large contribution from the error, moving the estimation far from the desired solution.  While the difference between greedy selection and randomized selection is not so drastic for Gaussian noise, it will be drastically different for a sparse error.  We include an example system in Figure \ref{fig:spikyerrorgeom} to assist with intuition.  
Again, one could of course implement the Kaczmarz approach after an initial use of the Motzkin method as a strategy to gain acceleration without sacrificing convergence horizon. In Figure \ref{fig:MSGDvsRKwHyb}, we present the convergence of Motzkin's method, the RK method, and a hybrid method which consists of Motzkin iterations until $\|A\ve{x}_k - \ve{b}\|_{\infty} \leq 4\|\ve{e}\|_\infty$, followed by RK iterations.  Again we include results on both a system with Gaussian error and a system with a sparse, `spiky' error, with the systems generated as in Figure \ref{fig:MSGDvsRK}.  

\begin{figure}
	\includegraphics[width=0.5\textwidth]{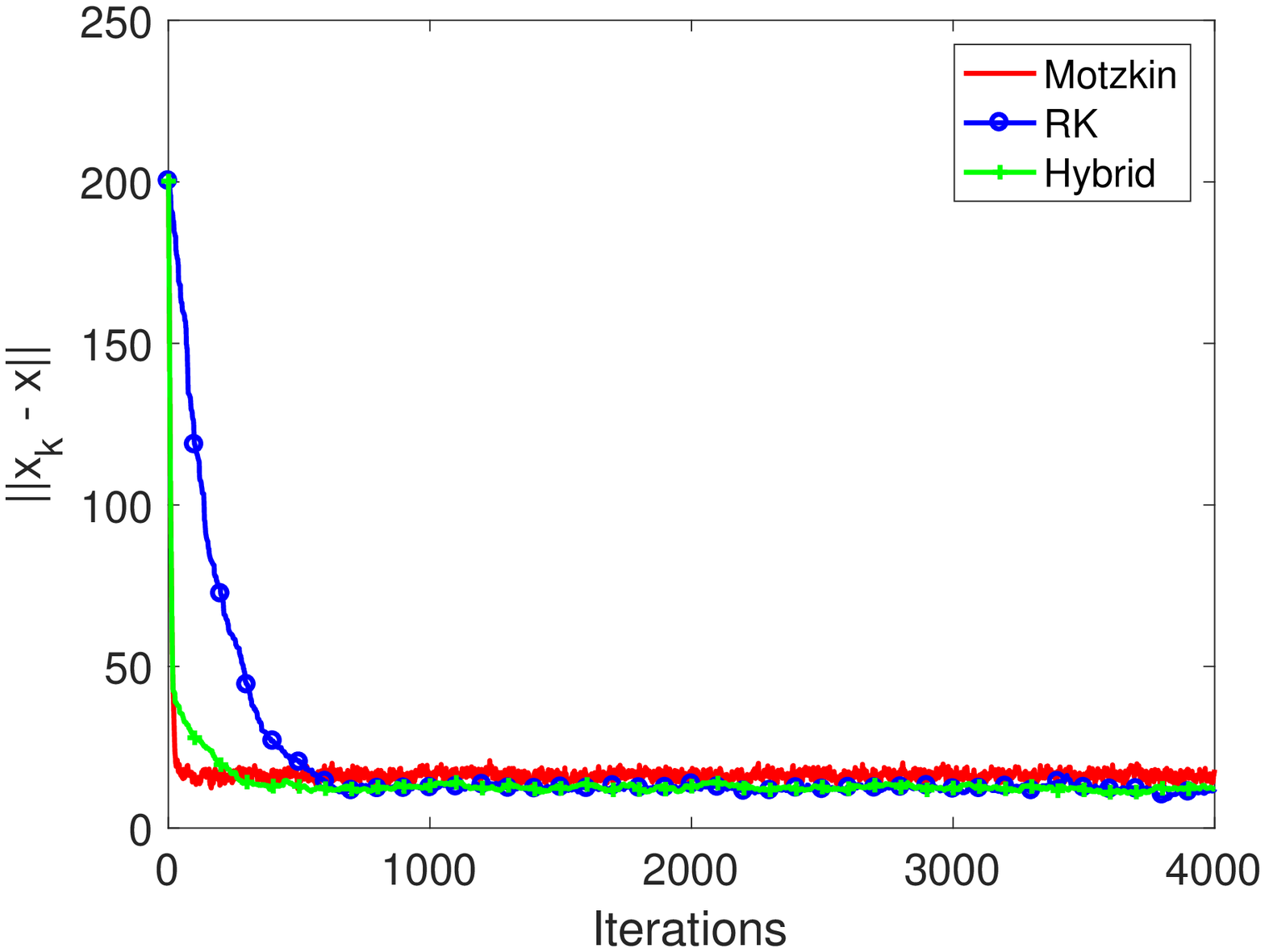}\includegraphics[width=0.5\textwidth]{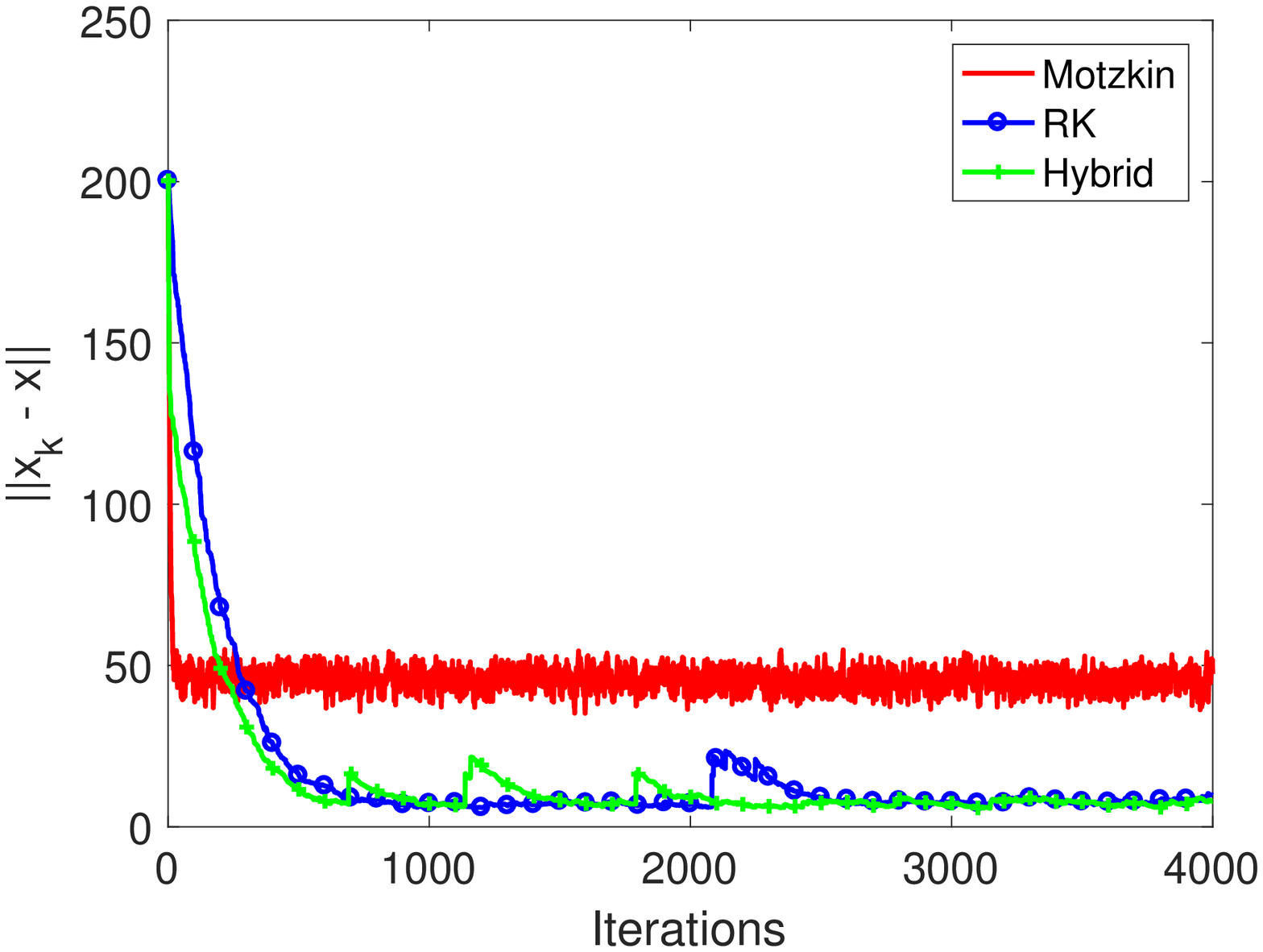}
	\caption{Left: Motzkin's method, RK, and hybrid distance from least-squares solution for a Gaussian system with Gaussian noise.  Right: Motzkin's method, RK, and hybrid distance from least-squares solution for a Gaussian system with sparse, `spiky' noise.}
	\label{fig:MSGDvsRKwHyb}
\end{figure}

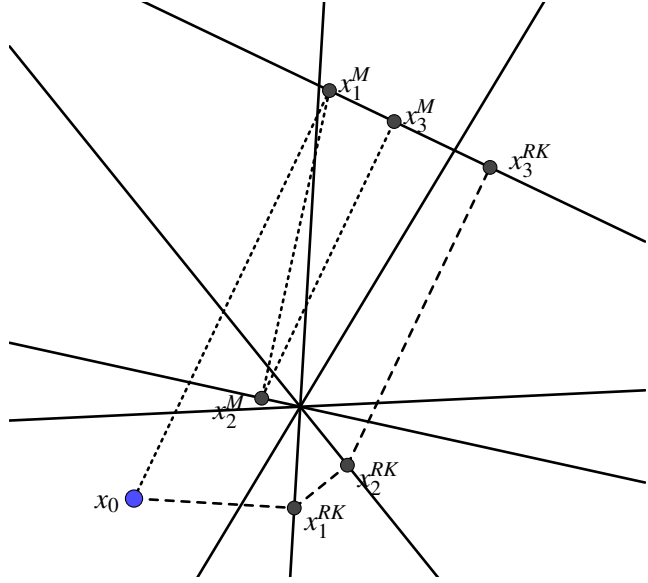
\begin{figure}
	\definecolor{uuuuuu}{rgb}{0.26666666666666666,0.26666666666666666,0.26666666666666666}
	\definecolor{ududff}{rgb}{0.30196078431372547,0.30196078431372547,1.}
	\begin{tikzpicture}[line cap=round,line join=round,>=triangle 45,x=1.0cm,y=1.0cm,scale=1.3]
	\clip(-1.8328966839999432,0.24146467201957236) rectangle (4.6663614276577885,6.121209445477681);
	\draw [line width=1.pt,domain=-1.8328966839999432:4.6663614276577885] plot(\x,{(--0.126--2.46*\x)/1.48});
	\draw [line width=1.pt,domain=-1.8328966839999432:4.6663614276577885] plot(\x,{(--7.0944-0.7*\x)/3.18});
	\draw [line width=1.pt,domain=-1.8328966839999432:4.6663614276577885] plot(\x,{(--2.436-2.38*\x)/-0.14});
	\draw [line width=1.pt,domain=-1.8328966839999432:4.6663614276577885] plot(\x,{(-5.6616-0.14*\x)/-2.94});
	\draw [line width=1.pt,domain=-1.8328966839999432:4.6663614276577885] plot(\x,{(-5.3628--1.96*\x)/-1.58});
	\draw [line width=1.pt,domain=-1.8328966839999432:4.6663614276577885] plot(\x,{(--27.6296-2.24*\x)/4.68});
	\draw [line width=1.pt,dotted] (-0.56,1.04)-- (1.4384261515601784,5.215283209509658);
	\draw [line width=1.pt,dotted] (1.4384261515601784,5.215283209509658)-- (0.7453784228729954,2.066866384902171);
	\draw [line width=1.pt,dotted] (0.7453784228729954,2.066866384902171)-- (2.1006131754430557,4.898338992950333);
	\draw [line width=1.pt,dash pattern=on 3pt off 3pt] (-0.56,1.04)-- (1.0790344827586205,0.9435862068965516);
	\draw [line width=1.pt,dash pattern=on 3pt off 3pt] (1.0790344827586205,0.9435862068965516)-- (1.6223874908869325,1.3815952644693748);
	\draw [line width=1.pt,dash pattern=on 3pt off 3pt] (1.6223874908869325,1.3815952644693748)-- (3.0810157616610923,4.429086473051101);
	\draw (-1.0600783709716528,1.176366618757163) node[anchor=north west] {$x_0$};
	\draw (1.433628190143457,5.555265681885436) node[anchor=north west] {$x_1^M$};
	\draw (0.1465315786857073,2.2013795965056073) node[anchor=north west] {$x_2^M$};
	\draw (2.0981970438484936,5.228613194471096) node[anchor=north west] {$x_3^M$};
	\draw (1.084447944976404,1.0749917088699543) node[anchor=north west] {$x_1^{RK}$};
	\draw (1.6251141310415185,1.514282985047859) node[anchor=north west] {$x_2^{RK}$};
	\draw (3.178154506091514,4.7667941605404785) node[anchor=north west] {$x_3^{RK}$};
	\begin{scriptsize}
	\draw [fill=ududff] (-0.56,1.04) circle (2.5pt);
	\draw [fill=uuuuuu] (1.4384261515601784,5.215283209509658) circle (2.0pt);
	\draw [fill=uuuuuu] (0.7453784228729954,2.066866384902171) circle (2.0pt);
	\draw [fill=uuuuuu] (2.1006131754430557,4.898338992950333) circle (2.0pt);
	\draw [fill=uuuuuu] (1.0790344827586205,0.9435862068965516) circle (2.0pt);
	\draw [fill=uuuuuu] (1.6223874908869325,1.3815952644693748) circle (2.0pt);
	\draw [fill=uuuuuu] (3.0810157616610923,4.429086473051101) circle (2.0pt);
	\end{scriptsize}
	\end{tikzpicture}
	\caption{An example of three iterations of Motzkin's method ($x_k^M$) and three iterations of RK ($x_k^{RK}$) on a Gaussian system with sparse, `spiky' error.  More of the RK iterations are near the least squares solution while Motzkin consistently selects the corrupted equation.}
	\label{fig:spikyerrorgeom}
\end{figure}

\subsection{Heuristics for the Gaussian case}
Here, we study heuristics for our convergence results for the Gaussian matrix case. Note that our results hold for matrices with normalized rows. For simplicity however, we will consider an $m\times n$ matrix whose entries are i.i.d. Gaussian with mean 0 and variance $1/n$. We will then assume we are in the asymptotic regime where this distribution approximates a Gaussian matrix with normalized unit-norm rows\footnote{
	This can be readily verified by observing that the distribution of $\ve{a}_i$ is rotationally invariant and thus $\left(\ve{a}_i^T\frac{\ve{x}}{\|\ve{x}\|}\right)^2$ has the same distribution as $\left(\ve{a}_i^T\ve{e}_1\right)^2$, where $\ve{e}_1$ is the first coordinate vector. Thus it has the same distribution as the ratio of chi-square random variables $g_1^2/\sum_{i=1}^n g_i^2$, for i.i.d. standard normal $g_i$. One then applies Slutsky's theorem to obtain the asymptotic result.} To that end, we assume $m$ and $n$ both grow linearly with respect to one another, and that they are both substantially large.

Define $I_k$ to be the rows of $A$ that are independent from $\ve{x}_k$ and note that $I_k \subseteq I_{k-1} \subseteq ... \subseteq I_1 \subseteq I_0 = [m]$. Fix iteration $k$ and define $m' = m - |I_k|$.  Note that $m-k \le m' \le m$ is the dimension of the sub-matrix whose rows are independent of the iterates up to iteration $k$. 
Throughout this section $\mathbb{P}$ and $\mathbb{E}$ refer to probability and expectation taken with respect to the random and unsampled portion of the matrix $A$, $A_{I_k}$, which has $m'$ rows.

Our first lemma gives a bound on the expected dynamic range for a Gaussian matrix. 

\begin{lemma}\label{lem:gammakbound}
	If $A \in \R^{m \times n}$ is a Gaussian matrix with $a_{ij} \sim \mathcal{N}(0,1/n)$ and $\ve{x}$ is independent of at least $m'$ rows of $A$ (e.g. constructed via $k$ iterations of Motzkin's method) then  
	$$\frac{\mathbb{E}\|A\ve{x}\|^2}{\mathbb{E}\|A\ve{x}\|_\infty^2} \lesssim \frac{n(m' + \sum_{i \not\in I_k} \|\ve{a}_i\|^2)}{\log(m')}.$$
\end{lemma}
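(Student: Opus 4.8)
The plan is to condition on everything that $\ve{x}$ depends on --- the rows $\{\ve{a}_i:i\notin I_k\}$ together with $\ve{b}$ and $\ve{x}_0$ --- so that $\ve{x}$ becomes a fixed vector while $A_{I_k}$ is a fresh i.i.d.\ $\mathcal{N}(0,1/n)$ block with $|I_k|=m'$ rows independent of $\ve{x}$; this is the heuristic simplification used throughout the section, since the greedy $\argmax$ in Algorithm~\ref{alg:MotzSelect} formally inspects every row. One then bounds the numerator $\mathbb{E}\|A\ve{x}\|^2$ from above and the denominator $\mathbb{E}\|A\ve{x}\|_\infty^2$ from below; in both estimates the scalar $\|\ve{x}\|^2/n$ factors out and cancels in the ratio, which is why the final bound is independent of $\ve{x}$.

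For the numerator, split $\|A\ve{x}\|^2=\sum_{i\in I_k}(\ve{a}_i^T\ve{x})^2+\sum_{i\notin I_k}(\ve{a}_i^T\ve{x})^2$. For $i\in I_k$ the row is independent of $\ve{x}$ with $\mathbb{E}[\ve{a}_i\ve{a}_i^T]=\tfrac1n I$, so $\mathbb{E}(\ve{a}_i^T\ve{x})^2=\tfrac1n\|\ve{x}\|^2$, and there are $m'$ such terms; for $i\notin I_k$ the row is fixed and Cauchy--Schwarz gives $(\ve{a}_i^T\ve{x})^2\le\|\ve{a}_i\|^2\|\ve{x}\|^2$. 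Summing,
$$\mathbb{E}\|A\ve{x}\|^2\;\le\;\frac{\|\ve{x}\|^2}{n}\left(m'+n\sum_{i\notin I_k}\|\ve{a}_i\|^2\right).$$

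For the denominator, keep only the independent rows: $\mathbb{E}\|A\ve{x}\|_\infty^2\ge\mathbb{E}\max_{i\in I_k}(\ve{a}_i^T\ve{x})^2=\tfrac{\|\ve{x}\|^2}{n}\,\mathbb{E}\max_{i\in I_k}g_i^2$, where conditionally on $\ve{x}$ the $g_i$, $i\in I_k$, are i.i.d.\ standard normal ($m'$ of them). The crux is the extreme-value lower bound $\mathbb{E}\max_{i\le m'}g_i^2\gtrsim 2\log m'$: one uses $\mathbb{P}(\max_i g_i^2\ge t)\ge 1-e^{-m'\,\mathbb{P}(g_1^2\ge t)}$ together with the Gaussian lower tail $\mathbb{P}(g_1^2\ge t)\gtrsim t^{-1/2}e^{-t/2}$, takes $t=(2-\epsilon)\log m'$ so that $m'\,\mathbb{P}(g_1^2\ge t)\to\infty$ and hence $\mathbb{P}(\max_i g_i^2\ge t)\to1$, and concludes with $\mathbb{E}Z\ge t\,\mathbb{P}(Z\ge t)$ for $Z\ge0$. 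Thus $\mathbb{E}\|A\ve{x}\|_\infty^2\gtrsim\tfrac{\|\ve{x}\|^2}{n}\log m'$.

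Dividing the two estimates cancels $\|\ve{x}\|^2/n$ and leaves
$$\frac{\mathbb{E}\|A\ve{x}\|^2}{\mathbb{E}\|A\ve{x}\|_\infty^2}\;\lesssim\;\frac{m'+n\sum_{i\notin I_k}\|\ve{a}_i\|^2}{\log m'}\;\le\;\frac{n\left(m'+\sum_{i\notin I_k}\|\ve{a}_i\|^2\right)}{\log m'},$$
where the final inequality is just $m'\le nm'$. I expect the only genuine obstacle to be the denominator: the logarithmic lower bound on the expected maximum of $m'$ squared Gaussians is classical but must be handled with some care about constants (and with the standing heuristic that conditioning leaves $A_{I_k}$ truly independent of $\ve{x}$). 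The numerator bound is nothing more than the second-moment identity for Gaussian rows and Cauchy--Schwarz.
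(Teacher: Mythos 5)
Your proposal is correct and follows essentially the same route as the paper: the numerator is bounded by splitting the rows into the independent block $I_k$ and the remaining rows (the paper uses Cauchy--Schwarz plus $\mathbb{E}\|\ve{a}_i\|^2=1$ where you use the exact identity $\mathbb{E}(\ve{a}_i^T\ve{x})^2=\|\ve{x}\|^2/n$, a harmless tightening that you relax again via $m'\le nm'$), and the denominator is bounded below by restricting the max to $I_k$ and invoking the $\gtrsim\|\ve{x}\|^2\log(m')/n$ lower bound on the expected squared maximum of $m'$ independent Gaussians. The only cosmetic difference is that you derive this extreme-value bound directly from tail estimates, whereas the paper gets it via Jensen's inequality and the standard fact $\mathbb{E}\max_{i\le N}X_i\ge c\sigma\sqrt{\log N}$.
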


\begin{proof}
	First note that 
	\begin{align*}
	\mathbb{E}(\sum_{i=1}^m (\ve{a}_i^T\ve{x})^2) &= \sum_{i=1}^m \mathbb{E} (\ve{a}_i^T\ve{x})^2
	\\&\le \sum_{i=1}^m \mathbb{E}(\|\ve{a}_i\|^2 \|\ve{x}\|^2) & \text{by Cauchy-Schwartz}
	\\&\le \sum_{i \in I_k} \mathbb{E}(\|\ve{a}_i\|^2 \|\ve{x}\|^2) + \sum_{i \not\in I_k} \|\ve{a}_i\|^2 \|\ve{x}\|^2
	\\&= (m' + \sum_{i \not\in I_k} \|\ve{a}_i\|^2) \|\ve{x}\|^2.
	\end{align*}
	
	Next, note that if $\ve{a}_i$ and $\ve{x}$ are independent then $\ve{a}_i^T\ve{x} \sim \mathcal{N}(0,\|\ve{x}\|^2/n)$.  Then
	\begin{align*}
	\mathbb{E}(\max_{i \in [m]} (\ve{a}_i^T\ve{x})^2) &\ge \mathbb{E}(\max_{i \in I_k} (\ve{a}_i^T\ve{x})^2)
	\\& \ge \mathbb{E}(\max_{i \in I_k} \ve{a}_i^T\ve{x})^2
	\\& \ge (\mathbb{E}\max_{i \in I_k} \ve{a}_i^T\ve{x})^2 &\text{by Jensen's inequality}
	\\& \ge \frac{c\|\ve{x}\|^2\log(m')}{n},
	\end{align*}
	as it is commonly known that $\mathbb{E}(\max_{i \in [N]} X_i) \ge c \sigma \sqrt{log N}$ for $X_i \sim \mathcal{N}(0,\sigma^2)$.  Thus, we have $$\mathbb{E}\|A\ve{x}\|_\infty^2 \ge \frac{c \|\ve{x}\|^2 \log(m')}{n} \ge c \frac{\log(m')}{n(m' + \sum_{i \not\in I_k}\|\ve{a}_i\|^2)}\mathbb{E}\|A\ve{x}\|^2.$$
\end{proof}

We can use this lemma along with our main result to obtain the following.

\begin{corollary}
	Let $A \in \R^{m \times n}$ be a normalized Gaussian matrix as described previously, 
	$\ve{x}$ denote the desired solution of the system given by matrix $A$ and right hand side $\ve{b}$, write $\ve{e} = A\ve{x}-\ve{b}$ as the error term and assume $\ve{x}_0$ is chosen so that $\ve{x}_0 - \ve{x}$ is independent of the rows of $A$, $\ve{a}_i^T$. 
	If Algorithm \ref{alg:MotzSelect} is run with stopping criterion $\|A\ve{x}_k - \ve{b}\|_\infty \leq 4\|\ve{e}\|_\infty$, in expectation 
	the method exhibits the accelerated convergence rate
	
	\begin{equation}\label{grrr}
	\mathbb{E} \|\ve{x}_{k+1}-\ve{x}\|^2 \lesssim \mathbb{E}\Bigg[\Bigg(1 - \frac{\log(m') \sigma_{\min}^2(A)}{4nm}\Bigg) \|\ve{x}_k - \ve{x}\|^2 + \frac{1}{2}\|\ve{e}\|_\infty^2\Bigg].
	\end{equation}
\end{corollary}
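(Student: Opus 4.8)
The plan is to combine the one–step error recursion already established inside the proof of Corollary \ref{cor:Motzrate} with the Gaussian dynamic–range estimate of Lemma \ref{lem:gammakbound}. Since the method is run with the stopping criterion $\|A\ve{x}_k - \ve{b}\|_\infty \le 4\|\ve{e}\|_\infty$, the hypothesis of Lemma \ref{lem:motzkin} is in force at every step before termination, so the chain of inequalities culminating in \eqref{last1} applies verbatim with $k-1$ replaced by $k$; that is,
\[
\|\ve{x}_{k+1} - \ve{x}\|^2 \le \left(1 - \frac{\sigma_{\min}^2(A)}{4\gamma_k}\right)\|\ve{x}_k - \ve{x}\|^2 + \tfrac12\|\ve{e}\|_\infty^2,
\]
with $\gamma_k = \|A\ve{x}_k - A\ve{x}\|^2/\|A\ve{x}_k - A\ve{x}\|_\infty^2$ exactly as in Corollary \ref{cor:Motzrate}. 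Thus the whole task reduces to controlling $\gamma_k$ in the Gaussian regime, which is precisely what Lemma \ref{lem:gammakbound} provides.

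Next I would apply Lemma \ref{lem:gammakbound} with the vector $\ve{x}_k - \ve{x}$ in the role of $\ve{x}$ there. This is admissible in the asymptotic/heuristic sense of this subsection: the assumption on $\ve{x}_0$ makes $\ve{x}_0 - \ve{x}$ independent of the rows of $A$, and $\ve{x}_k$ is built from $\ve{x}_0$ using only the at most $k$ rows sampled in the first $k$ iterations, so $\ve{x}_k - \ve{x}$ is independent of the $m'$ unsampled rows $A_{I_k}$, while $\ve{e} = A\ve{x} - \ve{b}$ involves neither. Since the rows are normalized, $\sum_{i \notin I_k}\|\ve{a}_i\|^2 = m - m'$, so Lemma \ref{lem:gammakbound} gives $\gamma_k \lesssim nm/\log(m')$, equivalently $\sigma_{\min}^2(A)/(4\gamma_k) \gtrsim \log(m')\,\sigma_{\min}^2(A)/(4nm)$. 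Substituting this into the displayed recursion and taking expectation over $A_{I_k}$, keeping $m'$ (which is determined by the already-sampled rows) inside the expectation, yields \eqref{grrr}.

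The main obstacle, and the reason this subsection is explicitly flagged as heuristic and stated with $\lesssim$, is the rigorous treatment of the randomness: Lemma \ref{lem:gammakbound} bounds the ratio of expectations $\mathbb{E}\|A\ve{x}\|^2/\mathbb{E}\|A\ve{x}\|_\infty^2$, whereas the recursion really calls for a bound on $\mathbb{E}\|A\ve{x}_k - A\ve{x}\|_\infty^2 = \mathbb{E}\big[\|A\ve{x}_k - A\ve{x}\|^2/\gamma_k\big]$, the expectation of a ratio, and interchanging the two is not valid in general; one also silently treats the random vector $\ve{x}_k - \ve{x}$ as fixed when invoking the Gaussian maximal inequality $\mathbb{E}\max_i \ve{a}_i^T\ve{v} \gtrsim \|\ve{v}\|\sqrt{\log(m')/n}$, and relies on the footnote's Slutsky argument so that the variance-$1/n$ model stands in for the normalized-row model. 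A more careful route would condition on $\ve{x}_k$, run the maximal-inequality step of Lemma \ref{lem:gammakbound} conditionally to obtain $\mathbb{E}\big[\|A\ve{x}_k - A\ve{x}\|_\infty^2 \mid \ve{x}_k\big] \gtrsim (\log(m')/n)\,\|\ve{x}_k - \ve{x}\|^2$, and then feed this directly into the reindexed \eqref{loser} before taking the outer expectation; since $\sigma_{\min}^2(A)/m \lesssim 1/n$ for a normalized matrix, this in fact implies \eqref{grrr} with room to spare.
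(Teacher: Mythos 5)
Your proposal is correct and follows essentially the paper's own route: the paper likewise starts from a reindexed \eqref{loser}, takes expectations, applies Lemma \ref{lem:gammakbound} to the vector $\ve{x}_k-\ve{x}$ (using $m'+\sum_{i\notin I_k}\|\ve{a}_i\|^2=m$ for normalized rows), and finishes with $\mathbb{E}\|A(\ve{x}_k-\ve{x})\|^2\ge \sigma_{\min}^2(A)\,\mathbb{E}\|\ve{x}_k-\ve{x}\|^2$. The only difference is that the paper never routes through $\gamma_k$ and \eqref{last1} here, so the expectation-of-a-ratio concern you raise does not actually arise in its argument; your ``more careful'' conditional variant (bounding $\mathbb{E}\big[\|A(\ve{x}_k-\ve{x})\|_\infty^2 \mid \ve{x}_k\big]$ directly and then using $\sigma_{\min}^2(A)\le m/n$) is the same idea, stated a bit more rigorously than the paper's heuristic treatment, and yields \eqref{grrr} as you say.
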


\begin{proof}
	Beginning from line (\ref{loser}) of the proof of Corollary \ref{cor:Motzrate} and taking expectation of both sides, we have
	\begin{align*}
	\mathbb{E}\|\ve{x}_{k+1}-\ve{x}\|^2 &\le \mathbb{E} \| \ve{x}_k - \ve{x}\|^2 - \frac{1}{4}\mathbb{E} \|A(\ve{x}_k-\ve{x})\|_\infty^2 + \frac{1}{2}\mathbb{E}\|\ve{e}\|_\infty^2
	\\&\lesssim \mathbb{E} \| \ve{x}_k - \ve{x}\|^2 - \frac{\log(m')}{4n(m'+\sum_{i \not\in I_k}\|\ve{a}_i\|^2)}\mathbb{E} \|A(\ve{x}_k-\ve{x})\|^2 + \frac{1}{2}\mathbb{E}\|\ve{e}\|_\infty^2
	\\&= \mathbb{E} \Bigg[\| \ve{x}_k - \ve{x}\|^2 - \frac{ \log(m')}{4nm} \|A\ve{x}_k-A\ve{x}\|^2 + \frac{1}{2}\|\ve{e}\|_\infty^2\Bigg]
	\\&\le \mathbb{E} \Bigg[\Bigg(1 - \frac{\log(m')\sigma_{\min}^2(A)}{4nm}\Bigg) \|\ve{x}_k -\ve{x}\|^2 + \frac{1}{2}\|\ve{e}\|_\infty^2\Bigg]
	\end{align*}
	where the second inequality follows from Lemma \ref{lem:gammakbound} and the fourth from properties of singular values.
\end{proof}

This corollary implies a logarithmic improvement in the convergence rate if $n << \log(m')$, at least initially. Of course, we conjecture that the $\log(m')$ term in \eqref{grrr} is an artifact of the proof and could actually be replaced with $\log(m)$. Additionally, we conjecture that the $n$ in \eqref{grrr} is an artifact of the proof.  This is supported by the experiments shown in Figures \ref{residualpic} and \ref{coolpic}.  Before normalization, the system for the experiment plotted in Figure \ref{coolpic} is defined by Gaussian $A \in \mathbb{R}^{50000 \times 100}$ and $\ve{b} = \ve{e}$ where $\ve{e}$ is a Gaussian vector. Furthermore, Corollary 5.35 of \cite{vershynin2010introduction} provides a lower bound for the size of the smallest singular value of $A$ with high probability, $\mathbb{P}\left(\sigma_{\min}(A) \leq \sqrt{m/n} - 1 - t/\sqrt{n}\right) \leq 2e^{-t^2/2}.$  That is, asymptotically $\sigma_{\min}(A)$ is tightly centered around $\sqrt{m/n} - 1$.

\begin{figure}
	\begin{center}
		\includegraphics[width=0.6\textwidth]{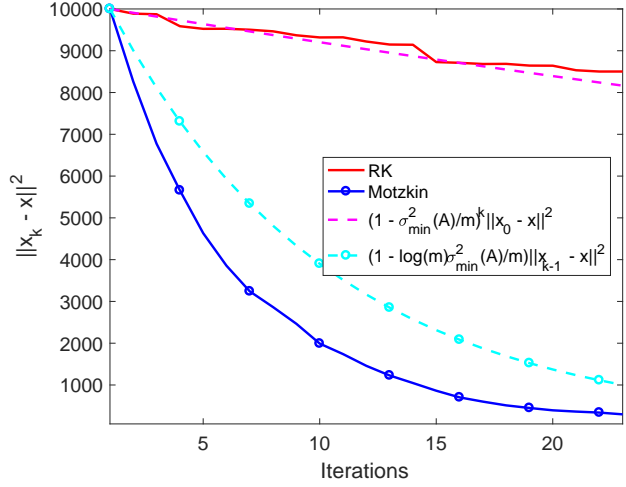}
		\caption{Convergence of Motzkin's method and RK on Gaussian system with corresponding theoretical rate for RK and conjectured rate for Motzkin's method. }\label{coolpic}
	\end{center}
\end{figure}

\section{Conclusion}
We have provided a theoretical analysis for Motzkin's method for inconsistent systems. We show that by using such a greedy selection strategy, the method exhibits an accelerated convergence rate until a particular threshold is reached. This threshold depends on the dynamic range of the residual, and could be estimated to employ a strategy that yields acceleration without sacrificing convergence accuracy. We provide experiments and concrete analysis for Gaussian systems that support our claims. Future work includes a detailed analysis for other types of relevant systems, theoretical guarantees when estimating the residual, and the study of computational tradeoffs as in the framework of \cite{SKM}.  While Motzkin's method can be more computationally expensive than RK, understanding the accelerated convergence per iteration will aid in an analysis of computational tradeoffs for the methods in \cite{SKM}; in addition, it may offer significant advantages in parallel architectures.  These are important directions for future work.

\bibliographystyle{myalpha}     
\bibliography{bib}

\end{document}